\pdfoutput=1

\documentclass{amsart}

\usepackage{amsfonts,amssymb,amsmath,enumerate,verbatim,mathtools,tikz,bm,mathrsfs,tikz-cd,hyperref,comment,stmaryrd,amsthm}
\usepackage{colonequals}
\usepackage{adjustbox}
\usepackage[all,pdf]{xy}
\hypersetup{
    colorlinks=true,
    linkcolor=blue,
    filecolor=magenta,      
    urlcolor=cyan,
}

\makeatletter
\@namedef{subjclassname@2020}{\textup{2020} Mathematics Subject Classification}
\makeatother

\author[Jian Liu]{Jian Liu}
\address{School of Mathematical Sciences, Shanghai Jiao Tong University, Shanghai 200240, P.R. China. }
\email{liuj231@sjtu.edu.cn}

\keywords{annihilator of the singularity category, cohomological annihilator, generation time, Jacobian ideal, Koszul object}
\subjclass[2020]{13D09(primary); 13D07, 16G50, 18E35 (secondary)}

\DeclareMathOperator{\cone}{cone}

\DeclareMathOperator{\depth}{depth}

\DeclareMathOperator{\h}{H}

\newcommand{\CC}{\mathcal{C}}

\newcommand{\Z}{\mathbb{Z}}

\newcommand{\T}{\mathcal{T}}

\newcommand{\D}{\mathsf{D}}

\newcommand{\N}{\mathbb{N}}

\newcommand{\para}{\mathbin{\!/\mkern-5mu/\!}}

\pgfdeclarelayer{bg}    
\pgfsetlayers{bg,main}

\newcommand{\del}{\partial}
\newcommand{\f}{{\bm{f}}}

\newcommand{\y}{{\bm{y}}}

\newcommand{\0}{{\bf{0}}}
\newcommand{\m}{\mathfrak{m}}

\newcommand{\p}{\mathfrak{p}}
\newcommand{\q}{\mathfrak{q}}

\DeclareMathOperator{\pd}{pd}

\DeclareMathOperator{\ca}{ca}

\DeclareMathOperator{\jac}{jac}

\DeclareMathOperator{\id}{id}
\DeclareMathOperator{\Spec}{Spec}

\DeclareMathOperator{\ann}{ann}

\DeclareMathOperator{\Hom}{Hom}

\DeclareMathOperator{\Ext}{Ext}

\DeclareMathOperator{\End}{End}

\DeclareMathOperator{\V}{V}

\DeclareMathOperator{\thick}{\mathsf{thick}}

\DeclareMathOperator{\Sing}{Sing}
\DeclareMathOperator{\Supp}{Supp}

\newcommand{\per}{\mathsf{perf}}
\newcommand{\sg}{\mathsf{sg}}

\newcommand{\mo}{\mathsf{mod}}

\newtheorem{theorem}{Theorem}[section]

\newtheorem{proposition}[theorem]{Proposition}

\newtheorem{lemma}[theorem]{Lemma}
\newtheorem{corollary}[theorem]{Corollary}

\theoremstyle{definition}
\newtheorem{example}[theorem]{Example}
\newtheorem{remark}[theorem]{Remark}

\newtheorem{chunk}[theorem]{}

\usepackage[utf8]{inputenc}

\newtheorem*{ack}{Acknowledgements}

\title[Annihilators and dimensions of the singularity category]{Annihilators and dimensions of the singularity category}
\date{\today}

\begin{document}
\maketitle

\begin{abstract}
    Let $R$ be a commutative noetherian ring. We prove that if $R$ is either an equidimensional finitely generated algebra over a perfect field, or an equidimensional equicharacteristic complete local ring with a perfect residue field, then the annihilator of the singularity category of $R$ coincides with the Jacobian ideal of $R$ up to radical. We establish a relation between the annihilator of the singularity category of $R$ and  the cohomological annihilator of $R$ under some mild assumptions. Finally, we give an upper bound for the dimension of the singularity category of an equicharacteristic excellent local ring with isolated singularity. This extends a result of Dao and Takahashi to non Cohen-Macaulay rings. 
\end{abstract}
\section{Introduction}
Let $R$ be a commutative noehterian ring. The \emph{singularity category} of $R$, denoted $\D_{\sg}(R)$, is the Verdier quotient of the bounded derived category with respect to the full subcategory of perfect complexes. This was introduced by Buchweitz \cite{Buchweitz1987} under the name ``stable derived category'' and later also by Orlov \cite{Orlov04,Orlov09} who related the singularity category to the homological mirror symmetry conjecture. The terminology is justified by the fact: $\D_{\sg}(R)$ is trivial if and only if $R$ is regular.
For a strongly Gorenstein ring $R$ (i.e. $R$ has finite injective dimension as an $R$-module), Buchweitz \cite{Buchweitz1987} established a triangle equivalence between the singularity category of $R$ and the stable category of maximal Cohen-Macaulay $R$-modules. 

In this article, we focus on studying the \emph{annihilator of the singularity category} of $R$, namely an ideal of $R$ consisting of elements in $R$ that annihilate the endomorphism ring of all complexes in $\D_{\sg}(R)$; see \ref{def}. We denote this ideal by $\ann_R \D_{\sg}(R)$. This ideal  measures the singularity of $R$ in the sense that $R$ is regular if and only if $\ann_R\D_{\sg}(R)=R$; see Example \ref{regular}. 

Buchweitz \cite{Buchweitz1987} observed that the Jacobian ideal $\jac(R)$ of $R$ annihilates the singularity category of $R$ when $R$ is a quotient of a formal power series over a field modulo a regular sequence. 
Recently, this result was extended to a large family of rings (e.g. equicharacteristic complete Cohen-Macaulay local ring) by Iyengar and Takahashi \cite{IT2021}. There is also a result contained in \cite{IT2021}: a power of the generalized  Jacobian ideal annihilates the  singularity category of a commutative noetherian ring; we point out this result should have an equidimensional assumption (see Example \ref{fail}).

It is worth noting that there are only a few classes of rings whose annihilators of the singularity category are known.  When $R$ is a one dimensional reduced complete Gorenstein local ring, Esentepe \cite{Esentepe} proved that the annihilator $\ann_R\D_{\sg}(R)$ is the conductor ideal of $R$, namely the annihilator of $\overline{R}/R$ over $R$, where $\overline{R}$ is the integral closure of $R$ inside its total quotient ring.

Our first result concerns the connection between the Jacobian ideal $\jac(R)$ and the ideal $\ann_R \D_{\sg}(R)$.
\begin{theorem}\label{t1} (see \ref{main result}) 
Let $R$ be either an equidimensional finitely generated algebra over a perfect field, or an equidimensional equicharacteristic complete local ring with a perfect residue field. Then
 $$
 \sqrt{\jac(R)}=\sqrt{\ann_R\D_{\sg}(R)}.
 $$
 In particular, $\jac(R)^s$ annihilates the singularity category of $R$ for some integer $s$.
\end{theorem}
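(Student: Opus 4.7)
The equality $\sqrt{\jac(R)}=\sqrt{\ann_R\D_{\sg}(R)}$ is equivalent to $\V(\jac(R))=\V(\ann_R\D_{\sg}(R))$ in $\Spec R$, and I would prove the two inclusions by quite different means.

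For the inclusion $\V(\ann_R\D_{\sg}(R)) \subseteq \V(\jac(R))$, I would establish $\jac(R)^s \subseteq \ann_R\D_{\sg}(R)$ for some $s\ge 1$, which is the statement highlighted by the ``in particular'' clause. This is the natural extension to the equidimensional setting of the Iyengar--Takahashi theorem mentioned in the introduction, and I would adapt their Koszul object machinery: realize $R$ via a Noether normalization (respectively a Cohen presentation in the complete local case) as a finite extension of a regular subring $A$, and use equidimensionality to identify the relevant Jacobian minors with scalars whose action on any $X \in \D_{\sg}(R)$ factors through a perfect complex obtained from a Koszul resolution on a lifted system of parameters.

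For the reverse inclusion $\V(\jac(R)) \subseteq \V(\ann_R\D_{\sg}(R))$, I would invoke the Jacobian criterion, which under our equidimensionality and perfect field/residue field hypotheses identifies $\V(\jac(R))$ with the singular locus $\Sing R$. It then suffices to show that whenever $R_\p$ is singular one has $\ann_R\D_{\sg}(R)\subseteq \p$. For this, I would consider $R/\p$ as an object of $\D_{\sg}(R)$; localization at $\p$ is a triangulated functor $\D_{\sg}(R)\to \D_{\sg}(R_\p)$ sending $R/\p$ to the residue field $k(\p)$, which is nonzero in $\D_{\sg}(R_\p)$ precisely because $R_\p$ is not regular. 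If some $r \in \ann_R\D_{\sg}(R)$ lay outside $\p$, then $r$ would be a unit in $R_\p$, so the relation $r\cdot \id_{R/\p}=0$ in $\D_{\sg}(R)$ would localize to $r\cdot \id_{k(\p)}=0$ in $\D_{\sg}(R_\p)$, forcing $k(\p)=0$ there---a contradiction.

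The hard part will be the first inclusion. The Iyengar--Takahashi result, as originally formulated for a \emph{generalized} Jacobian ideal, already requires the equidimensional hypothesis (as the author notes via Example \ref{fail}), and a further refinement is needed to produce an annihilating power of the \emph{classical} Jacobian ideal $\jac(R)$. I expect this to demand a careful comparison, exploiting the perfect field hypothesis, between the generalized and classical Jacobian ideals up to radical, together with a Koszul or generation-time argument to bound the exponent $s$. The second inclusion, by contrast, is essentially formal once the Jacobian criterion is in hand.
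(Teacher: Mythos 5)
Your second inclusion, $\V(\jac(R))\subseteq \V(\ann_R\D_{\sg}(R))$, is fine: the Jacobian criterion (which is exactly where the equidimensionality and perfectness hypotheses enter, via \cite[Corollary 16.20]{Ei} in the affine case and the Wang/Iyengar--Takahashi results in the complete local case) gives $\V(\jac(R))=\Sing(R)$, and your localization argument with $R/\p$ mapping to $k(\p)$ correctly shows $\Sing(R)\subseteq \V(\ann_R\D_{\sg}(R))$; this is the paper's ``easy'' direction (Lemma \ref{basic result} together with Corollary \ref{sin}), and your direct argument is a legitimate shortcut for it.

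The genuine gap is in the first inclusion. You propose to establish $\jac(R)^s\subseteq \ann_R\D_{\sg}(R)$ (or at least $\jac(R)\subseteq\sqrt{\ann_R\D_{\sg}(R)}$) by adapting the Iyengar--Takahashi Koszul-object machinery through a Noether normalization or Cohen presentation and then comparing the generalized Jacobian ideal with the classical one ``up to radical, exploiting the perfect field hypothesis.'' None of this is carried out, and it is precisely where all the content of the theorem sits: you have in effect deferred the hard inclusion to a program that would amount to reproving and refining \cite{IT2021}. The paper avoids this entirely and, importantly, reverses the logical order you propose: the containment $\jac(R)^s\subseteq\ann_R\D_{\sg}(R)$ is a \emph{consequence} of the radical equality, not an input to it. Concretely, under the stated hypotheses $\D^f(R)$, hence $\D_{\sg}(R)$, has a strong generator by \cite[Corollary 7.2]{IT2016} (Remark \ref{finite}); if $\D_{\sg}(R)=\thick^n(G)$ then $(\ann_R G)^n\subseteq\ann_R\D_{\sg}(R)$, so $\V(\ann_R\D_{\sg}(R))\subseteq \V(\ann_R G)=\Supp_R\Hom(G,G)$, and combining this with the equivalence $\D_{\sg}(R)_\p\cong\D_{\sg}(R_\p)$ of Corollary \ref{sin} yields $\V(\ann_R\D_{\sg}(R))=\Sing(R)$ (Proposition \ref{theorem} and Theorem \ref{locus}). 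The Jacobian criterion then gives $\V(\ann_R\D_{\sg}(R))=\Sing(R)=\V(\jac(R))$, and since $\jac(R)$ is finitely generated and contained in $\sqrt{\ann_R\D_{\sg}(R)}$, some power $\jac(R)^s$ lies in $\ann_R\D_{\sg}(R)$. So either you must actually supply the Koszul/Noether-normalization argument you sketch (a substantial undertaking, and the comparison of generalized versus classical Jacobian ideals is not routine), or you should replace that half of your proof by the strong-generation argument, which is what makes the theorem accessible.
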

The proof of the above result relies on the Jacobian criterion and Theorem \ref{locus}. It is proved in Theorem \ref{locus} that $\ann_R\D_{\sg}(R)$ defines the singular locus of $R$ if $\D_{\sg}(R)$ has a \emph{strong generator}; see definition of strong generator in \ref{def of dim}. The proof of Theorem \ref{locus} makes use of the localization and annihilator of an essentially small $R$-linear triangulated category discussed in Section \ref{section2}. The hypothesis of Theorem \ref{t1} ensures that $\D_{\sg}(R)$ has a strong generator.
Indeed, this can be inferred from a result of Iyengar and Takahashi \cite{IT2016} that says the bounded derived category of $R$ has a strong generator if $R$ is either a localization of a finitely generated algebra over a field or an equicharacteristic excellent local ring.

The ideal $\ann_R \D_{\sg}(R)$ is closely related the \emph{cohomological annihilator} $\ca(R)$ of $R$. By definition, $\ca(R)=\bigcup_{n\in \Z}\ca^n(R)$, where $\ca^n(R)$ consists of elements $r$ in $R$ such that $r\cdot \Ext^n_R(M,N)=0$ for all finitely generated $R$-modules $M,N$. The ideal $\ca(R)$ was initially studied by
Dieterich \cite{Dieterich} and Yoshino \cite{Yoshino87} in connection with the Brauer-Thrall conjecture. 
Cohomological annihilators are of independent interest and have been systematically studied by Wang \cite{Wang1994, Wang1998}, Iyengar and Takahashi \cite{IT2016, IT2021}.
When $R$ is a strongly Gorenstein ring,  Esentepe \cite{Esentepe} observed that the cohomological annihilator coincides with the annihilator of the singularity category. We compare the relation of these two annihilators in Section \ref{section4} for general rings. The main result in Section \ref{section4} is the following:

\begin{proposition}\label{t2} (see \ref{relation})
Let $R$ be a commutative noetherian ring. Then

(1) $\ca(R)\subseteq \ann_R \D_{\sg}(R)$.

 (2) If furthermore $R$ is either a localization of a finitely generated algebra over a field or an equicharacteristic excellent local ring, then 
 $$
\sqrt{\ca(R)}=\sqrt{ \ann_R\D_{\sg}(R)}.
$$
\end{proposition}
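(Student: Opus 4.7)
My plan for (1) is a direct syzygy argument that bridges the $\Ext$-annihilation defining $\ca(R)$ with the endomorphism annihilation defining $\ann_R\D_{\sg}(R)$, without any Gorenstein or Cohen-Macaulay hypothesis. Given $r\in\ca^m(R)$, I aim to show $r\cdot\id_X=0$ in $\End_{\D_{\sg}(R)}(X)$ for every $X\in\D_{\sg}(R)$; by $R$-linearity this forces $r\cdot\phi=0$ for every morphism $\phi$ of $\D_{\sg}(R)$, so $r\in\ann_R\D_{\sg}(R)$. Truncating a projective resolution shows that every object of $\D_{\sg}(R)$ is isomorphic, up to shift, to a finitely generated $R$-module, so it suffices to treat $X=M$ a module.

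Given such an $M$, take a resolution by finitely generated projectives and consider the syzygy short exact sequences $0\to\Omega^{i+1}M\to P_i\to\Omega^i M\to 0$. Each gives a triangle in $\D^b(\mo R)$ whose connecting morphism $\Omega^i M\to\Omega^{i+1}M[1]$ becomes an isomorphism in $\D_{\sg}(R)$ since its cone $P_i[1]$ is perfect. The $m$-fold composition $\xi_m\colon M\to\Omega^m M[m]$ is therefore an isomorphism in $\D_{\sg}(R)$, and under the identification $\Hom_{\D^b(\mo R)}(M,\Omega^m M[m])=\Ext^m_R(M,\Omega^m M)$ it is the class of the $m$-fold Yoneda extension $0\to\Omega^m M\to P_{m-1}\to\cdots\to P_0\to M\to 0$. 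Since $r\in\ca^m(R)$ annihilates all of $\Ext^m_R(M,\Omega^m M)$, we have $r\cdot\xi_m=0$ in $\D^b(\mo R)$, and hence also in $\D_{\sg}(R)$. Composing with $\xi_m^{-1}$ in $\D_{\sg}(R)$ yields $r\cdot\id_M=0$, which completes (1).

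For (2), under the stated hypotheses the theorem of Iyengar--Takahashi \cite{IT2016} produces a strong generator of $\D^b(\mo R)$; passing to the Verdier quotient, $\D_{\sg}(R)$ inherits a strong generator. Theorem \ref{locus} then gives $\sqrt{\ann_R\D_{\sg}(R)}$ equal to the defining ideal of the singular locus of $R$. In the same reference \cite{IT2016}, Iyengar and Takahashi also show that $\sqrt{\ca(R)}$ equals this defining ideal. Combined with the inclusion $\ca(R)\subseteq\ann_R\D_{\sg}(R)$ from (1), this yields $\sqrt{\ca(R)}=\sqrt{\ann_R\D_{\sg}(R)}$. The main technical point is the proof of (1): the single morphism $\xi_m$ plays a dual role as both a cohomology class (where $\ca^m(R)$-annihilation forces vanishing) and an isomorphism in $\D_{\sg}(R)$ (through which this vanishing transfers to the identity), and this interplay avoids any Tate-cohomology formalism that would otherwise require a Gorenstein hypothesis.
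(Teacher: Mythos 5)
Your proof is correct, and part (2) is essentially the paper's own argument: finite dimension of $\D_{\sg}(R)$ (inherited from strong generation of $\D^f(R)$ by Iyengar--Takahashi) feeds into Theorem \ref{locus} to give $V(\ann_R\D_{\sg}(R))=\Sing(R)$, which \cite{IT2016} also identifies with $V(\ca(R))$; note that invoking the inclusion from (1) here is harmless but not needed, since equality of the two closed sets already gives equality of radicals. For part (1) your mechanism differs from the paper's in a way worth recording. The paper first replaces an arbitrary object, up to shift, by an $(n-1)$-st syzygy module $X=\Omega^{n-1}_R(Y)$, so that $\Ext^1_R(X,\Omega^1_R(X))\cong\Ext^n_R(Y,\Omega^1_R(X))$ is annihilated by $r\in\ca^n(R)$, and then uses this $\Ext^1$-vanishing to factor the multiplication map $r\colon X\to X$ through the projective cover $P(X)\twoheadrightarrow X$ (the diagram argument of Lemma \ref{technique}), hence $r\cdot\id_X=0$ in $\D_{\sg}(R)$. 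You instead keep an arbitrary module $M$ and exploit the composite connecting morphism $\xi_m\colon M\to\Sigma^m\Omega^m_R(M)$, which is simultaneously an element of $\Ext^m_R(M,\Omega^m_R(M))$ --- hence killed by $r\in\ca^m(R)$, regardless of which class it is --- and an isomorphism in $\D_{\sg}(R)$, since each rotated triangle has perfect cone $\Sigma P_i$; composing $r\xi_m=0$ with $\xi_m^{-1}$ gives $r\cdot\id_M=0$. Both arguments rest on the same underlying fact (annihilation of an $\Ext$-group between $M$ and a syzygy controls $\id_M$ in the singularity category), but yours avoids the syzygy reduction and the explicit pushout diagram, working entirely inside the derived category, while the paper's version is more module-theoretic and yields an explicit factorization of multiplication by $r$ through a projective. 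Your reduction to modules via brutal truncation and the $R$-linearity step ($r\cdot\id_X=0$ implies $r$ kills $\Hom_{\D_{\sg}(R)}(X,X)$) are both sound.
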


For a local ring $R$, it is proved that the cohomological annihilator contains the socle of $R$; see \cite{IT2016}. Hence in this case, Proposition \ref{t2} yields that the socle of $R$ annihilates the singularity category of $R$; see Corollary \ref{socle}.

Let $G$ be an object in a triangulated category $\T$, the \emph{generation time} of $G$ in $\T$ is the minimal number of cones required to generate $\T$, up to shifts and direct summands; see \ref{def of dim}. If there exists an object $G$ in $\T$ with finite generation time, then this number will give an upper bound for the dimension of $\T$ introduced by Rouquier \cite{Rouquier}. By making use of the  dimension of the stable category of exterior algebras, Rouquier \cite{Rouquier06} proved that the representation dimension can be arbitrary large.

Usually it is difficult to find a precise generator of a given triangulated category with finite dimension; see \cite{IT2016}. 
Due to Keller, Murfet, and Van den Bergh \cite{KMVdB}, for an isolated singularity $(R,\m,k)$, the singularity category of $R$ is generated by $k$; we recover this result in Corollary \ref{iso}. Inspired by this result and  Theorem \ref{locus}, we give an upper bound for the dimension of the singularity category of an equicharacteristic excellent local ring with isolated singularity. 
\begin{theorem}\label{t3} (see \ref{upper bound})
Let $(R,\m, k)$ be an equicharacteristic excellent local ring. If $R$ has an isolated singularity, then

(1) $\ann_R \D_{\sg}(R)$ is $\m$-primary.

(2) For any $\m$-primary ideal $I$ that is contained in $\ann_R\D_{\sg}(R)$,  then  
$k$ is a generator of $\D_{\sg}(R)$ with generation time at most $(\nu(I)-\depth(R)+1)\ell\ell(R/I)$. 
\end{theorem}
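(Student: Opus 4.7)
The plan is to deduce (1) from the singular-locus description in Theorem \ref{locus}, and to prove (2) by realizing every $M \in \D_{\sg}(R)$ as a direct summand of a suitable Koszul object whose total cohomology lives in few degrees and is annihilated by $I$.

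For (1), since $(R,\m,k)$ is equicharacteristic excellent local, a result of Iyengar and Takahashi guarantees that $\D^b(R)$, and hence also $\D_{\sg}(R)$, admits a strong generator. Theorem \ref{locus} then identifies $V(\ann_R \D_{\sg}(R))$ with $\Sing(R)=\{\m\}$, whence $\ann_R\D_{\sg}(R)$ is $\m$-primary.

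For (2), fix an $\m$-primary $I\subseteq\ann_R\D_{\sg}(R)$ and set $d=\depth R$, $n=\nu(I)$, $\ell=\ell\ell(R/I)$. Let $M\in\D_{\sg}(R)$. Replacing $M$ by a sufficiently high syzygy (which is isomorphic to $M$ up to shift in $\D_{\sg}(R)$), we may assume $M$ is a finitely generated module with $\depth_R M\geq d$; after the faithfully flat extension $R\to R[t]_{(\m R[t],t)}$ we may further assume the residue field of $R$ is infinite. Since $I$ is $\m$-primary with $\grade_I R=\grade_I M=d$, prime avoidance yields a minimal generating set $\f=f_1,\ldots,f_n$ of $I$ whose initial segment $f_1,\ldots,f_d$ is both $R$-regular and $M$-regular. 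Each $f_i$ annihilates $\End_{\D_{\sg}(R)}(M)$, so the triangle $M\xrightarrow{f_i}M\to\kos{M}{f_i}\to\shift M$ splits in $\D_{\sg}(R)$; iterating, $M$ is a direct summand of $\kos{M}{\f}$ in $\D_{\sg}(R)$. Meanwhile, the $M$-regularity of $f_1,\ldots,f_d$ yields a $\D^b(R)$-isomorphism $\kos{M}{f_1,\ldots,f_d}\simeq M/(f_1,\ldots,f_d)M$; setting $\bar M = M/(f_1,\ldots,f_d)M$, we therefore get $\kos{M}{\f}\simeq\kos{\bar M}{f_{d+1},\ldots,f_n}$ in $\D^b(R)$.

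The Koszul complex $\kos{\bar M}{f_{d+1},\ldots,f_n}$ has homology supported in at most $n-d+1$ consecutive degrees, and each homology module is annihilated both by $(f_1,\ldots,f_d)$ (via $\bar M$) and by $(f_{d+1},\ldots,f_n)$ (standard Koszul annihilation), hence by $I$; such a module has Loewy length at most $\ell$ and, via its Loewy filtration, lies in $\thick^{\ell}(k)$. Combining these cohomologies with the truncation triangles of the bounded complex yields $\kos{\bar M}{f_{d+1},\ldots,f_n}\in\thick^{(n-d+1)\ell}(k)$, and the same therefore holds for its direct summand $M$ in $\D_{\sg}(R)$, establishing the asserted bound on the generation time of $k$. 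The principal technical point is arranging an $M$-regular prefix of length $d$ inside a minimal generating set of $I$, which is exactly what forces the residue-field extension and prime avoidance step; the rest is routine Koszul and truncation bookkeeping.
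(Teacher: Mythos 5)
Your part (1) and the skeleton of part (2) match the paper's own argument (Lemma \ref{bound} plus Remark \ref{finite}): reduce to a high syzygy $M$ with $\depth M\geq\depth R$, split $M$ off the Koszul object $\kos{M}{\f}$ on a minimal generating set of $I$ using $I\subseteq\ann_R\D_{\sg}(R)$, note each cohomology of the Koszul object is an $R/I$-module and hence lies in $\thick^{\ell\ell(R/I)}(k)$, and assemble via truncation triangles. The one place you genuinely diverge is how you bound the number of nonvanishing cohomologies, and that is where your proof has a gap. You arrange, by prime avoidance, a minimal generating set of $I$ whose first $d=\depth R$ elements are $R$- and $M$-regular, and to do so you pass to a faithfully flat extension to make the residue field infinite. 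Two problems: first, the extension you name, $R[t]_{(\m R[t],t)}$, has residue field $k$ again, so it does not achieve anything (the intended object is $R[t]_{\m R[t]}$); second, and more seriously, you never justify descent: knowing that $k'$ generates $\D_{\sg}(R')$ with generation time at most $(\nu(I)-d+1)\ell\ell(R/I)$ over the extension $R'$ does not formally give the same bound for $k$ over $R$. Descending thick-subcategory membership (with its counting) along a non-finite flat extension is a nontrivial statement that needs an argument or a citation, and "we may further assume" does not supply one.

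The good news is that the entire reduction is unnecessary. The paper avoids the regular-sequence prefix altogether by invoking depth sensitivity of Koszul complexes (\cite[Theorem 1.6.17]{BH}): for any generating sequence $\bm{x}=x_1,\ldots,x_n$ of the $\m$-primary ideal $I$, the Koszul object $\kos{M}{\bm{x}}$ has at most $n-\grade(I,M)+1=n-\depth(M)+1\leq n-\depth(R)+1$ nonvanishing cohomologies, with no hypothesis that any subsequence be regular and no constraint on the residue field. If you replace your prime-avoidance/base-change construction of $\bar M$ by this one citation, the rest of your bookkeeping (cohomologies killed by $I$, Loewy filtration of length $\ell\ell(R/I)$, truncation triangles, and passing to the direct summand $M$) goes through verbatim and reproduces the paper's proof.
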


In the above result, $\nu(I)$ is the minimal number of generators of $I$ and $\ell\ell(R/I)$ is the Loewy length of $R/I$, i.e. the minimal integer $n\in \N$ such that $(\m/I)^n=0$.

Theorem \ref{t3} builds on ideas from a result of Dao and Takahashi \cite{DT2015} and extends their result to non Cohen-Macaulay rings; see Remark \ref{connection}. The key new ingredient in our proof makes use of Theorem \ref{locus}.

\begin{ack}
This work was inspired by the collaboration with Srikanth Iyengar, Janina Letz, and Josh Pollitz \cite{ILLP}. During the collaboration with them,  the author learned about the annihilator of the singularity category. The author would like to thank them for their discussions and valuable comments. During this work,  the author visited China Jiliang University and Northeast Normal University. The author would like to thank Xianhui Fu, Pengjie Jiao, and Junling Zheng for their hospitality and discussions.
\end{ack}

\section{Notation and Terminology}
Throughout this article, $R$ will be a commutative noehterian ring. 
\begin{chunk}
\textbf{Derived category and singularity category.} Let $\D(R)$ denote the derived category of $R$-modules. It is a triangulated category with the shift functor $\Sigma$; for each complex $X\in \D(R)$, $\Sigma(X)$ is given by $\Sigma(X)^i=X^{i+1}$ and $\del_{\Sigma(X)}=-\del_X$. 

We let $\D^f(R)$ denote the full subcategory of $\D(R)$ consisting of  complexes $X$ such that the total cohomology $\bigoplus_{i\in \Z}\h^i(X)$ is a finitely generated $R$-module. $\D^f(R)$ inherits the structure of triangulated category from $\D(R)$.

A complex $X\in \D^f(R)$ is called \emph{perfect} if it is isomorphic to bounded complex of finitely generated projective $R$-modules. We let $\per(R)$ denote the full subcategory of $\D^f(R)$ consisting of perfect complexes. The \emph{singularity category} of $R$ is the Verdier quotient  $$\D_{\sg}(R)\colonequals\D^f(R)/\per(R).$$
This was first introduced by Buchweitz \cite[Definition 1.2.2]{Buchweitz1987} under the name ``stable derived category"; see also \cite{Orlov04}. For two complexes $X,Y\in \D_{\sg}(R)$, recall that each morphism from $X$ to $Y$ in $\D_{\sg}(R)$ is of the form $X\xleftarrow \alpha Z\xrightarrow \beta Y$, where $\alpha,\beta$ are morphisms in $\D^f(R)$ and the cone of $\alpha$ is a perfect complex; see \cite{Verdier}.
\end{chunk}

\begin{chunk}
\textbf{Thick subcategory.} Let $\T$ be a  triangulated category. A subcategory $\CC$ of $\T$ is called \emph{thick} if $\CC$ is closed under shifts, cones, and direct summands. For example, $\per(R)$ is a thick subcategory of $\D^f(R)$; see \cite[Lemma 1.2.1]{Buchweitz1987}.

For each object $X$ in $\T$, set $\thick_{\T}^0(X)=\{0\}$. Denote by $\thick_\T^1(X)$ the smallest full subcategory of $\T$ that contains $X$ and is closed under finite direct sums, direct summands, and shifts. Inductively, let $\thick_\T^n(X)$ denote the full subcategory of $\T$ consisting of objects $Y\in\T$ that fit into an exact triangle
$$
Y_1\rightarrow Y\oplus Y^\prime\rightarrow Y_2\rightarrow \Sigma(Y_1),
$$
where $Y_1\in \thick^1_{\T}(X)$ and $Y_2\in \thick_{\T}^{n-1}(X)$.
Note that the smallest thick subcategory of $\T$ containing $X$, denoted $\thick_\T(X)$, is precisely $\bigcup_{n\geq 0}\thick_\T^n(X)$. 
\end{chunk}
\begin{chunk}\label{def of dim}
\textbf{Dimension of triangulated categories. } Let $\T$ be a triangulated category. The \emph{dimension} of $\T$ introduced by Rouquier \cite{Rouquier} is defined to be 
$$
\dim \T\colonequals\inf\{n\in \N\mid \text{ there exists }G\in \T \text{ such that }\T=\thick_\T^{n+1}(G)\}.
$$
Let $G$ be an object in $\T$. $G$ is called a \emph{generator} of $\T$ if $\thick_\T(G)=\T$. $G$ is called a \emph{strong generator} of $\T$ if $\thick_\T^n(G)=\T$ for some $n\in \N$. The minimal number $n$ such that $\thick_\T^n(G)=\T$ is called the \emph{generation time} of $G$ in $\T$.

For example, if $R$ is an artinian ring, then $R/J(R)$ is a strong generator of $\D^f(R)$ with generation time at most $\ell\ell(R)$, where $J(R)$ is the Jacobian radical of $R$ and $\ell\ell(R)\colonequals\inf\{n\in \N\mid J(R)^n=0\}$ is the Loewy length of $R$; see \cite[Proposition 7.37]{Rouquier}. 
\end{chunk}

\begin{chunk}\label{def of syzygy}
\textbf{Syzygy modules.} For a finitely generated $R$-module $M$ and $n\geq 1$, we let $\Omega_R^n(M)$ denote the $n$-th syzygy of $M$. That is, there is a long exact sequence
$$
0\rightarrow \Omega_R^n(M)\rightarrow P^{-(n-1)}\rightarrow \cdots P^{-1}\rightarrow P^0\rightarrow M\rightarrow 0,
$$
where $P^{-i}$ are finitely generated projective $R$-modules for all  $0\leq i\leq n-1$. By Schanuel's Lemma, $\Omega^n_R(M)$ is independent of the choice of the projective resolution of $M$ up to projective summands. 

When $R$ is local, by choosing the minimal free resolution of $M$, the module $\Omega_R^n(M)$ has no projective summands. In this case, we always assume $\Omega_R^n(M)$ has no projective summands in the article. 
\end{chunk}
\begin{chunk}
\textbf{Support of modules.} Let $\Spec(R)$ denote the set of all prime ideals of $R$. It is endowed with the Zariski topology. A closed subset in this topology is of the form $V(I)\colonequals\{\p\in \Spec(R)\mid \p\supseteq I\}$, where $I$ is an ideal of $R$. For each $R$-module $M$, the \emph{support} of $M$ is 
$$
\Supp_RM\colonequals\{\p\in \Spec(R)\mid M_\p\neq 0\},
$$
where $M_\p$ is the localization of $M$ at $\p$.
\end{chunk}

 \section{Localization and annihilator of triangulated categories}\label{section2}
Throughout this section, $R$ will be a  commutative noetherian ring and $\T$ will be an essentially small $R$-linear triangulated category. 
\begin{chunk}
 We say the triangulated category $\T$ is $R$-\emph{linear} if for each $X\in \T$, there is a ring homomorphism $$\phi_X\colon R\rightarrow \Hom_{\T}(X,X)$$ such that the $R$-action on $\Hom_{\T}(X,Y)$ from the right via $\phi_X$ and from the left via $\phi_Y$ are compatible.  That is, for each $r\in R$ and $\alpha\in \Hom_{\T}(X,Y)$, one has $$\phi_Y(r)\circ \alpha=\alpha\circ \phi_X(r).$$
\end{chunk}

\begin{chunk}\label{def}
For each $X\in \T$, the \emph{annihilator} of $X$, denoted $\ann_RX$, is defined to be the annihilator of $\Hom_{\T}(X,X)$ over $R$. That is,
$$
\ann_RX\colonequals\{r\in R\mid r\cdot \Hom_\T(X,X)=0\}.
$$
The annihilator of $\T$ is defined to be 
$$
\ann_R\T\colonequals\bigcap_{X\in \T}\ann_RX.
$$
\end{chunk}
A commutative noetherian local ring is called \emph{regular} if its maximal ideal can be generated by a system of parameter. Due to Auslander, Buchsbaum, and Serre, a commutative noetherian local ring is regular if and only if its global dimension is finite; see \cite[Theorem 2.2.7]{BH}. A commutative noetherian ring $R$ is called regular provided that $R_\p$ is regular for all $\p\in \Spec(R)$. 

\begin{example}\label{regular}
Consider the $R$-linear triangulated category $\D_{\sg}(R)$. As mentioned in the introduction,  $R$ is regular if and only if  $\ann_R\D_{\sg}(R)=R$. Indeed, it is clear that $\ann_R\D_{\sg}(R)=R$ ( $ \iff\D_{\sg}(R)$ is trivial) is equivalent to that  every finitely generated $R$-module has finite projective dimension. It turns out that this is equivalent to $R$ is regular. According to Auslander, Buchsbaum, and Serre's criterion, the forward direction is clear. 
For the backward direction, see \cite[Lemma 4.5]{BM}.
\end{example}
\begin{chunk}
Let $V$ be a \emph{specicalization closed subset} of $\Spec(R)$; that is, if $\p\in V$, then the prime ideal $\q$ is in $V$ if $\p\subseteq \q$. Following Benson, Iyengar, and Krause \cite[Section 3]{BIK2015}, we define $\T_V$ to be full subcategory
$$
\T_V\colonequals\{X\in \T\mid \Hom_{\T}(X,X)_\p=0 \text{ for all }\p\in \Spec(R)\setminus V\}.
$$
We observe that $\T_V$ is a thick subcategory of $\T$ as the $R$-action on $\Hom_{\T}(X,Y)$ factors through $\End_{\T}(X)$-action on $\Hom_\T(X,Y)$ and $\End_\T(Y)$-action on $\Hom_\T(X,Y)$.

 For each prime ideal $\p$ of $R$,  set 
 $$Z(\p)\colonequals\{\q\in \Spec(R)\mid \q\nsubseteq \p\}.$$
 Then $Z(\p)$ is a specialization closed subset of $\Spec(R)$.
 The \emph{localization} of $\T$ at $\p$ is defined to be the verdier quotient
 $$
 \T_\p\colonequals\T/ \T_{Z(\p)}.
 $$
 \end{chunk}
\begin{example}
Consider the $R$-linear triangulated category $\D^f(R)$. Since $R$ is noetherian, for $X,Y\in \D^f(R)$, one has
$$
\Hom_{\D^f(R)}(X,Y)_\p\cong \Hom_{\D^f(R_\p)}(X_\p,Y_\p).
$$
This immediately yields that $\Hom_{\D^f(R)}(X,X)_\p=0$ if and only if $X_\p=0$ in $\D^f(R_\p)$; the latter means $X_\p$ is acyclic. We conclude that
$$
\D^f(R)_{Z(\p)}=\{X\in \D^f(R)\mid X_\p \text{ is acyclic}\}.
$$
Combining with this,  \cite[Lemma 2.2]{Orlov2011} implies that $\D^f(R)/\D^f(R)_{Z(\p)}\cong \D^f(R_\p)$. That is, there is a triangle equivalence $$\D^f(R)_\p\cong \D^f(R_\p).$$
\end{example}
We will show that an analogue of the above example holds for the singularity category; see Corollary \ref{sin}.

\begin{lemma}\label{support}
For each object $X$ in  $\T$, we have
$$
\Supp_R \Hom_{\T}(X,X)=V(\ann_{R}X).
$$
In particular, $\Supp_R\Hom_\T(X,X)$ is a closed subset of $\Spec(R)$. 
\end{lemma}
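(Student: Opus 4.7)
The plan is to reduce the statement to two elementary observations about localization once we identify $\ann_R X$ with the kernel of the structure map $\phi_X : R \to E$, where $E := \Hom_\T(X,X)$. By definition $\ann_R X = \ker(\phi_X)$, so $\phi_X$ induces an injective ring homomorphism $R/\ann_R X \hookrightarrow E$; in particular, $E$ carries a natural $R/\ann_R X$-module structure with $R/\ann_R X$ sitting inside it as an $R$-submodule.

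For the inclusion $\Supp_R E \subseteq V(\ann_R X)$, I would argue by contrapositive. Suppose $\p \in \Spec(R)$ with $\ann_R X \not\subseteq \p$, and pick $r \in \ann_R X \setminus \p$. On the one hand, $r$ annihilates $E$, so it annihilates $E_\p$. On the other hand, the image of $r$ in $R_\p$ is a unit, so multiplication by $r$ acts as an isomorphism on the $R_\p$-module $E_\p$. Together these force $E_\p = 0$, which shows $\p \notin \Supp_R E$. Note that this direction does not require $E$ to be finitely generated over $R$.

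For the reverse inclusion $V(\ann_R X) \subseteq \Supp_R E$, this is where finite generation typically fails but the embedding above saves the day. If $\p \supseteq \ann_R X$, then applying the exact functor $(-)_\p$ to the inclusion $R/\ann_R X \hookrightarrow E$ gives an injection $(R/\ann_R X)_\p \hookrightarrow E_\p$. The left-hand side is nonzero since $\p/\ann_R X$ is a prime ideal of the nonzero ring $R/\ann_R X$, and localization of a nonzero commutative ring at a prime is nonzero. Hence $E_\p \neq 0$, so $\p \in \Supp_R E$.

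The ``in particular'' clause is then immediate, since $V(\ann_R X)$ is by definition closed in the Zariski topology on $\Spec(R)$. The only real subtlety, and the one reason the statement isn't completely formal, is that $E = \Hom_\T(X,X)$ need not be a finitely generated $R$-module, so the standard equality $\Supp_R M = V(\ann_R M)$ for finitely generated modules cannot be invoked directly; the argument above sidesteps this by exploiting the ring-theoretic fact that $\ann_R X$ is a kernel, yielding the embedding $R/\ann_R X \hookrightarrow E$.
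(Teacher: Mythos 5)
Your proof is correct and is essentially the paper's argument in slightly more structural clothing: the paper also reduces everything to the identity morphism, showing that if $\Hom_\T(X,X)_\p=0$ then some $r\notin\p$ kills $\id_X$ and hence lies in $\ann_RX$, which is exactly the observation that the cyclic submodule $R\cdot\id_X\cong R/\ann_RX$ survives localization at any $\p\supseteq\ann_RX$. Your identification $\ann_RX=\ker\phi_X$ (an immediate consequence of the definition, via $r\cdot\id_X=\phi_X(r)$, rather than the definition itself) and the use of exactness of localization are just a repackaging of that same step, so no further comparison is needed.
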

\begin{proof}
The second statement follows immediately from the first one.

It is clear $\Supp_R\Hom_{\T}(X,X)\subseteq V(\ann_{R}X)$.
For the converse, let $\ann_{R}X\subseteq \p$ for some prime ideal $\p$ of $R$. We claim that $\Hom_{\T}(X,X)_\p\neq 0$. If not, assume $\Hom_{\T}(X,X)_\p=0$. Consider the identity morphism $\id_X\colon X\rightarrow X$ in $\Hom_\T(X,X)$. The assumption yields that $\id_X$ is zero in the localization $\Hom_\T(X,X)_\p$. Thus there exists $r\notin \p$ such that $r\cdot \id_X=0$. Then it is clear that $r\in \ann_{R}X$. Hence $\ann_RX\nsubseteq \p$. This contradicts with $\ann_RX\subseteq \p$.  As required.
\end{proof}

\begin{chunk}\label{def of Kos}
Let $X$ be an object in $\T$. Given an element $r\in R$, the \emph{Koszul object} of $r$ on $X$, denoted $X\para r$ , is the object that fits into the exact triangle
$$
X\xrightarrow r X\rightarrow X\para r \rightarrow \Sigma(X).
$$
That is, $X\para r$ is the cone of the map $r\colon X\rightarrow X$.  For a sequence $\bm{r}=r_1,\ldots,r_n$, one can define the Koszul object $X\para\bm{r}$ by induction on $n$. It is not difficult to show 
\begin{equation}\label{Koszul}
    \Supp_R \Hom_\T(X\para \bm{r},X\para \bm{r})\subseteq \Supp_R\Hom_\T(X,X)\cap V(\bm{r}).
\end{equation}

\end{chunk}

 The following result is a direct consequence of \cite[Lemma 3.5]{BIK2015}. 
\begin{lemma}\label{structure}
For each prime ideal $\p$ of $R$,  
$$
\T_{Z(\p)}=\thick_{\T}(X\para r\mid X\in \T, r\notin \p)
$$
and the quotient functor $\T\rightarrow \T/\T_{Z(\p)}=\T_\p$ induces a natural isomorphism
$$
\Hom_{\T}(X,Y)_\p\cong \Hom_{\T_\p}(X,Y)
$$
for $X,Y$ in $\T$. \qed
\end{lemma}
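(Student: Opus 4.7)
The plan is to reduce both assertions to \cite[Lemma 3.5]{BIK2015} applied to the multiplicative subset $S\colonequals R\setminus\p$ of $R$, after reformulating $\T_{Z(\p)}$ as the subcategory of objects whose identity morphism is annihilated by some element of $S$.

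First I would unpack the definition of $\T_{Z(\p)}$. Since $\Spec(R)\setminus Z(\p)=\{\q\in\Spec(R)\mid \q\subseteq\p\}$ and $\id_X$ lies in $\Hom_\T(X,X)$, the vanishing of $\Hom_\T(X,X)_\q$ for every $\q\subseteq\p$ actually collapses to the single condition $\Hom_\T(X,X)_\p=0$: if some $s\in R\setminus\p$ kills $\id_X$, it kills every morphism out of $X$, and $s$ lies outside every $\q\subseteq\p$. By Lemma \ref{support} this is equivalent to $\ann_RX\nsubseteq\p$. So $\T_{Z(\p)}$ consists precisely of those $X$ admitting some $s\in S$ with $s\cdot\id_X=0$.

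Next I would establish the first equality in two steps. For the inclusion $\supseteq$ the key input is the standard Koszul identity $r^2\cdot\id_{X\para r}=0$, obtained by applying $\Hom_\T(X\para r,-)$ to the defining triangle $X\xrightarrow{r}X\xrightarrow{f}X\para r\xrightarrow{g}\Sigma X$ and exploiting that both $r\cdot g$ and $f\circ r$ vanish (each being a composition of consecutive maps in a triangle). This places $X\para r$ in $\T_{Z(\p)}$ whenever $r\in S$, and thickness of $\T_{Z(\p)}$ closes the argument. For the reverse inclusion $\subseteq$, given $X\in\T_{Z(\p)}$ I would pick $r\in\ann_RX\cap S$; since $r\cdot\id_X=0$, the triangle $X\xrightarrow{0}X\to X\para r\to\Sigma X$ splits and $X\para r\cong X\oplus\Sigma X$ exhibits $X$ as a direct summand of a Koszul object on an element of $S$.

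Finally, for the Hom isomorphism, I would turn to Verdier's calculus of fractions: morphisms $X\to Y$ in $\T_\p=\T/\T_{Z(\p)}$ are equivalence classes of roofs $X\xleftarrow{\alpha}Z\xrightarrow{\beta}Y$ with $\cone(\alpha)\in\T_{Z(\p)}$. The first part identifies these cones with iterated Koszul objects on elements of $S$, so a cofinality argument should refine any such roof to one of the form $X\xleftarrow{s\cdot\id_X}X\xrightarrow{\phi}Y$ with $s\in S$, thereby identifying the colimit that defines $\Hom_{\T_\p}(X,Y)$ with the classical localization $\Hom_\T(X,Y)_\p$. I expect this cofinality step to be the main technical obstacle, and since it is exactly what \cite[Lemma 3.5]{BIK2015} provides, my plan is to invoke that result directly rather than reprove it.
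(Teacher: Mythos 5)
Your proposal is correct and ultimately rests on the same source as the paper, which proves nothing itself and simply records the lemma as a direct consequence of \cite[Lemma 3.5]{BIK2015}; your appeal to that lemma for the calculus-of-fractions/cofinality step is exactly the paper's route. The extra details you supply — collapsing the defining condition of $\T_{Z(\p)}$ to $\Hom_\T(X,X)_\p=0$, the identity $r^2\cdot\id_{X\para r}=0$, and the splitting $X\para s\cong X\oplus\Sigma X$ when $s\cdot\id_X=0$ — are accurate and harmless additions.
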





\begin{corollary}\label{module}
Let $X$ be an object in $\T$. Then
$$
\{\p\in \Spec(R)\mid X\neq 0 \text{ in } \T_\p\}=V(\ann_RX).
$$
\end{corollary}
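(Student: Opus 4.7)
The plan is to reduce the statement directly to Lemma \ref{support} via the localization description in Lemma \ref{structure}. The key observation, which I would isolate as the main step, is that for any object $X\in\T$ and any prime $\p\in\Spec(R)$, one has $X\cong 0$ in $\T_\p$ if and only if $\Hom_\T(X,X)_\p=0$.

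First I would prove this equivalence. For the nontrivial direction, suppose $\Hom_\T(X,X)_\p=0$. By Lemma \ref{structure}, the natural isomorphism $\Hom_{\T_\p}(X,X)\cong \Hom_\T(X,X)_\p$ forces $\id_X=0$ in $\T_\p$, whence $X\cong 0$ in $\T_\p$. Conversely, if $X\cong 0$ in $\T_\p$, then $\id_X=0$ in $\Hom_{\T_\p}(X,X)$, and for any $\alpha\in \Hom_{\T_\p}(X,X)$ we get $\alpha=\alpha\circ \id_X=0$; applying Lemma \ref{structure} once more gives $\Hom_\T(X,X)_\p=0$.

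Having established the equivalence, I would invoke Lemma \ref{support} to conclude:
\[
\{\p\in \Spec(R)\mid X\neq 0 \text{ in }\T_\p\}=\{\p\in \Spec(R)\mid \Hom_\T(X,X)_\p\neq 0\}=\Supp_R\Hom_\T(X,X)=V(\ann_R X).
\]
I do not anticipate any real obstacle here: the corollary is essentially a repackaging of Lemmas \ref{support} and \ref{structure}, with the only content being the reduction of ``$X$ vanishes in the localized category'' to ``the endomorphism ring localizes to zero.''
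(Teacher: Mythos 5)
Your proposal is correct and follows essentially the same route as the paper: combine the isomorphism $\Hom_{\T}(X,X)_\p\cong \Hom_{\T_\p}(X,X)$ from Lemma \ref{structure} with Lemma \ref{support}, noting that vanishing of the localized endomorphism ring is equivalent to $X\cong 0$ in $\T_\p$. Your explicit identity-morphism argument just spells out the equivalence that the paper states in one line.
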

\begin{proof}
By Lemma \ref{support},
$
V(\ann_{R}X)=\Supp_R \Hom_{\T}(X,X).
$
Note that the isomorphism
$
\Hom_{\T}(X,X)_{\p}\cong \Hom_{\T_\p}(X,X)
$
in Lemma \ref{structure}
yields that $\Hom_{\T}(X,X)_\p\neq 0$ is equivalent to $X\neq 0$ in $\T_\p$. This completes the proof.
\end{proof}

\begin{lemma}\label{basic result}
$\{\p\in \Spec(R)\mid \T_\p\neq 0\}\subseteq V(\ann_R \T).$
\end{lemma}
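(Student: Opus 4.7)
The plan is to prove the contrapositive-flavored statement by invoking Corollary \ref{module}, which already controls the "vanishing locus" of a single object in $\T_\p$. The key observation is that $\T_\p$ being nonzero forces some witnessing object $X \in \T$ to be nonzero in $\T_\p$, and then the containment $\ann_R\T \subseteq \ann_R X$ pushes the prime $\p$ into $V(\ann_R\T)$.

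More concretely, I would proceed as follows. Suppose $\p \in \Spec(R)$ satisfies $\T_\p \neq 0$. Since every object of $\T_\p = \T/\T_{Z(\p)}$ is by construction (the image of) an object of $\T$, there must exist some $X \in \T$ whose image in $\T_\p$ is nonzero. By Corollary \ref{module}, the set of primes $\q$ for which $X \neq 0$ in $\T_\q$ is exactly $V(\ann_R X)$, so $\p \in V(\ann_R X)$, i.e., $\ann_R X \subseteq \p$. From the defining inclusion $\ann_R \T = \bigcap_{Y \in \T} \ann_R Y \subseteq \ann_R X$, we conclude $\ann_R \T \subseteq \p$, which is exactly $\p \in V(\ann_R\T)$.

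I do not anticipate a serious obstacle here: the only subtle point is justifying that a nontrivial Verdier quotient really does contain a nonzero object coming from the source category, but this is immediate from the definition of $\T/\T_{Z(\p)}$ (objects are those of $\T$, with morphisms formally inverted). After that the argument is a one-line chain of inclusions using Corollary \ref{module} and the definition of $\ann_R \T$. If one prefers a direct argument bypassing Corollary \ref{module}, one can instead pick $r \in \ann_R\T \setminus \p$, observe that $r$ simultaneously annihilates and becomes a unit in $\Hom_\T(X,X)_\p \cong \Hom_{\T_\p}(X,X)$ via Lemma \ref{structure}, and hence $\id_X = 0$ in $\T_\p$ for every $X$; but the Corollary \ref{module} route is shorter and more transparent.
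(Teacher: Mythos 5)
Your proposal is correct and follows essentially the same route as the paper: it combines Corollary \ref{module} with the inclusion $\ann_R\T\subseteq \ann_RX$ (equivalently $V(\ann_RX)\subseteq V(\ann_R\T)$), using that $\T_\p\neq 0$ forces some $X\in\T$ to be nonzero in $\T_\p$. The paper just phrases this as writing $\{\p\mid \T_\p\neq 0\}$ as the union over $X\in\T$ of $\{\p\mid X\neq 0 \text{ in } \T_\p\}$.
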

\begin{proof}
By definition $\ann_R\T\subseteq \ann_RX$ for each $X\in \T$. Thus we get that
$
V(\ann_RX)\subseteq V(\ann_R\T).
$ 
Combining with Corollary \ref{module}, we get
$$
\{\p\in \Spec(R)\mid \T_\p\neq 0\}=\bigcup_{X\in\T}\{\p\in \Spec(R)\mid X\neq 0 \text{ in }\T_\p\}\subseteq V(\ann_R\T).
$$
As required.
\end{proof}

The following is the main result of this section.
\begin{proposition}\label{theorem}
Let $\T$ be an essentially small $R$-linear triangulated category. If 
$\dim \T<\infty$, then
$$
\{\p\in \Spec R\mid \T_\p\neq 0\}= V(\ann_R\T).
$$
\end{proposition}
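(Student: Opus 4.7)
The containment $\subseteq$ is already delivered by Lemma \ref{basic result}, so the whole content of the proposition is the reverse inclusion $V(\ann_R\T)\subseteq \{\p\in\Spec R\mid \T_\p\neq 0\}$, and this is exactly where the hypothesis $\dim\T<\infty$ should be used. My plan is to argue the contrapositive: if $\T_\p=0$, then $\ann_R\T\not\subseteq \p$. The finite dimension gives a strong generator $G$ with $\T=\thick_\T^{n+1}(G)$ for some $n\geq 0$, and then the two facts I need to splice together are (i) vanishing of $G$ locally at $\p$ forces an element of $\ann_R G$ outside $\p$ (via Corollary \ref{module}), and (ii) some power of $\ann_R G$ is contained in $\ann_R\T$.

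The heart of the argument is statement (ii), which I would phrase as the following propagation lemma: for every $k\geq 1$ and every $X\in \thick_\T^k(G)$ one has $(\ann_R G)^k\subseteq \ann_R X$. I would prove this by induction on $k$. The base case $k=1$ uses only that $\ann_R$ is unchanged by shifts and finite direct sums, and that if $Y$ is a direct summand of $Z$ with splitting $i\colon Y\to Z$, $p\colon Z\to Y$, then $\phi_Y(r)=p\circ\phi_Z(r)\circ i$ by $R$-linearity, so $r\cdot \id_Z=0$ implies $r\cdot\id_Y=0$. For the inductive step, take $Y\oplus Y'$ fitting in a triangle $Y_1\to Y\oplus Y'\to Y_2\to \Sigma Y_1$ with $Y_1\in\thick_\T^1(G)$ and $Y_2\in\thick_\T^{k-1}(G)$. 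Given $r\in\ann_R G$, one has $r\cdot\id_{Y_2}=0$, so the composition of $r\cdot\id_{Y\oplus Y'}$ with $Y\oplus Y'\to Y_2$ vanishes; exactness of the triangle lets me factor $r\cdot\id_{Y\oplus Y'}$ through $Y_1\to Y\oplus Y'$. Multiplying by $r^{k-1}$, which kills $\id_{Y_1}$ by the base case applied to $Y_1$ (using $r^{k-1}\in(\ann_R G)^{k-1}\subseteq \ann_R Y_1$), gives $r^k\cdot\id_{Y\oplus Y'}=0$, and then the summand step yields $r^k\in\ann_R Y$.

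Once (ii) is in place, the proposition falls out quickly. Assume $\T_\p=0$. Since $G\in\T$, in particular $G\cong 0$ in $\T_\p$, so by Corollary \ref{module} we have $\p\notin V(\ann_R G)$, i.e.\ there exists $r\in\ann_R G$ with $r\notin \p$. By (ii), $r^{n+1}\in(\ann_R G)^{n+1}\subseteq \ann_R\T$. Since $\p$ is prime and $r\notin\p$, also $r^{n+1}\notin \p$, so $\ann_R\T\not\subseteq \p$, i.e.\ $\p\notin V(\ann_R\T)$. This is the contrapositive of the desired inclusion.

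I do not expect any real obstacle: the finite-dimension hypothesis is exactly what converts the inclusion $\sqrt{\ann_R G}\subseteq \sqrt{\ann_R\T}$ into something uniform across all objects, and the rest is book-keeping with the standard octahedral/triangle manipulation that moves annihilators through cones. The only point that deserves care is making sure each step respects the "up to direct summands" clause in the definition of $\thick_\T^k(G)$, which is handled by the summand observation in the base case.
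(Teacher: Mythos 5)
Your overall route is the same as the paper's: finite dimension supplies a strong generator $G$, a power of $\ann_R G$ is pushed into $\ann_R\T$, and the support/localization lemmas (Lemma \ref{support} and Lemma \ref{structure}, equivalently Corollary \ref{module}) show every prime containing $\ann_R G$ survives localization; the only packaging difference is that the paper cites \cite[Lemma 2.1]{Esentepe} for the power statement, while you prove it inline and phrase the final step contrapositively.

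There is, however, a genuine misstep inside your induction. In the inductive step you assert that $r\in\ann_R G$ already gives $r\cdot\id_{Y_2}=0$ for $Y_2\in\thick^{k-1}_\T(G)$. That is false in general: elements of $\ann_R G$ need not annihilate objects that take more than one cone to build out of $G$ (this is exactly why powers appear at all), and your inductive hypothesis only supplies $(\ann_R G)^{k-1}\subseteq\ann_R Y_2$. You have the roles of the two factors swapped. The correct bookkeeping is: $r^{k-1}\cdot\id_{Y\oplus Y'}$ composes to zero with $Y\oplus Y'\to Y_2$ by the inductive hypothesis, hence factors through $Y_1\to Y\oplus Y'$; one further multiplication by $r$ kills that factorization because $\ann_R G\subseteq \ann_R Y_1$ by the base case ($Y_1\in\thick^1_\T(G)$), giving $r^{k}\cdot\id_{Y\oplus Y'}=0$, and your summand observation finishes the step. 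A second, smaller point: your lemma is stated for the ideal power $(\ann_R G)^k$, but the induction as written only treats powers $r^k$ of a single element; for the proposition this suffices (you only need $r^{n+1}\notin\p$ and $r^{n+1}\in\ann_R\T$), and the corrected induction run with $k$ possibly distinct factors yields the full ideal statement. With these repairs your argument is correct and coincides with the paper's proof combined with Lemma \ref{basic result}.
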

\begin{proof}
Assume $\T=\thick^n_{\T}(G)$ for some $G\in \T$ and $n\in \N$. Set $I:=\ann_{R}G$. Then $I^n\subseteq \ann_R\T$; see \cite[Lemma 2.1]{Esentepe}. In particular, $\V(\ann_R \T)\subseteq V(I)$.

We claim that $V(I)\subseteq \{\p\in \Spec(R)\mid \T_\p\neq 0\}$. Indeed, let $\p\in \Spec (R)$ and $I\subseteq \p$, by Lemma \ref{support} we have $\Hom_{\T}(G,G)_\p\neq 0$. Thus we conclude that $\T_\p\neq 0$ by Lemma \ref{structure}. 

By above, we have $V(\ann_R\T)\subseteq \{\p\in \Spec(R)\mid \T_\p\neq 0\}$. The desired result now follows immediately from Lemma \ref{basic result}.
\end{proof}

\section{Annihilators of the singularity category}
In this section, We will investigate the annihilator of $\D_{\sg}(R)$ over $R$. It turns out that the Jacobian ideal and the annihilator of $\D_{\sg}(R)$ are equal up to radical under some assumptions; see Corollary \ref{main result}.

First we give a technical lemma which is used in the proof of Lemma \ref{isomorphism} and Lemma \ref{loc}; the proof is inspired by \cite[Lemma 2.2]{HP}.
\begin{lemma}\label{technique}
Let $X$ be an object in $\D_{\sg}(R)$ and $\p$ be a prime ideal of $R$. If $X_\p$ is perfect over $R_\p$, then there exists $r\notin \p$ such that
$X$ is a direct summand of $\Sigma^{-1}(X\para r)$ in $\D_{\sg}(R)$.
\end{lemma}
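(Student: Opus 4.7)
The plan is to establish the stronger statement that there exists $r \notin \p$ with $r \cdot \id_X = 0$ in $\D_{\sg}(R)$. Granted this, the triangle $X \xrightarrow{r} X \to X \para r \to \Sigma(X)$ defining the Koszul object splits in $\D_{\sg}(R)$, yielding $X \para r \cong X \oplus \Sigma(X)$; consequently $\Sigma^{-1}(X \para r) \cong \Sigma^{-1}(X) \oplus X$, exhibiting $X$ as a direct summand.

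First I would reduce to the case where $X$ is a finitely generated $R$-module. Take a bounded above resolution $P^\bullet \to X$ by finitely generated projective $R$-modules. Since $X_\p$ is perfect over $R_\p$, it has finite projective dimension there, so for $n$ sufficiently large the syzygy $M := \Omega^n_R X$ is a finitely generated $R$-module whose localization $M_\p = \Omega^n_{R_\p}(X_\p)$ is projective. Comparing $P^\bullet$ with its truncation in degrees $\geq -n+1$ produces a distinguished triangle in $\D(R)$ whose third term is a perfect complex; passing to $\D_{\sg}(R)$ yields an isomorphism $X \cong \Sigma^n(M)$. It therefore suffices to produce $r \notin \p$ with $r \cdot \id_M = 0$ in $\D_{\sg}(R)$.

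Next I would carry out the descent. Because $M_\p$ is finitely generated and projective over the local ring $R_\p$, it is free, say of rank $k$, and freeness spreads to an open neighborhood: there is $s \notin \p$ with $M_s \cong R_s^k$. Lift the isomorphism $R_s^k \xrightarrow{\sim} M_s$ to an $R$-linear map $\phi \colon R^k \to M$ by choosing representatives in $M$ of the images of the standard basis. Using the canonical identification $\Hom_R(M, R^k)_s \cong \Hom_{R_s}(M_s, R_s^k)$, select an $R$-linear map $\psi^\prime \colon M \to R^k$ whose localization at $s$ equals $s^N \phi_s^{-1}$ for some $N \geq 0$, obtained by clearing denominators in the entries of $\phi_s^{-1}$. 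Then $\phi \circ \psi^\prime - s^N \cdot \id_M \in \End_R(M)$ vanishes after inverting $s$, so some further power $s^{N^\prime}$ annihilates it; setting $\psi := s^{N^\prime} \psi^\prime$ and $r := s^{N+N^\prime}$, we obtain $r \notin \p$ together with a factorization $M \xrightarrow{\psi} R^k \xrightarrow{\phi} M$ whose composition equals $r \cdot \id_M$. Since $R^k$ is a perfect $R$-complex, this implies $r \cdot \id_M = 0$ in $\D_{\sg}(R)$, and via $X \cong \Sigma^n(M)$ we conclude $r \cdot \id_X = 0$.

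The main obstacle is precisely the denominator-clearing descent in the third paragraph: the abstract local isomorphism $M_\p \cong R_\p^k$ has to be lifted to concretely defined $R$-linear maps whose composition is explicitly a power of $s$ times $\id_M$. This is the single place where the perfection hypothesis at $\p$ is quantitatively exploited; the remaining arguments are formal manipulations in the triangulated setting.
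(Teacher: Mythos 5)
Your proof is correct, and while it runs on the same engine as the paper's, the packaging is genuinely different. Both arguments first reduce, by brutal truncation of a projective resolution, to a finitely generated module $M$ with $M_\p$ free and then produce $r\notin\p$ such that $r\cdot\id_M$ factors through a finitely generated projective module. The paper does this by noting that the finitely generated module $\Ext^1_R(M,\Omega^1_R(M))$ localizes to zero at $\p$, so some $r\notin\p$ kills it; the resulting comparison of the split extension with $0\to\Omega^1_R(M)\to P(M)\to M\to 0$ plus a snake-lemma computation identifies $M\para r\cong\Sigma(M\oplus\Omega^1_R(M))$ in $\D_{\sg}(R)$, which is a sharper description of the complement. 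You instead descend the isomorphism $M_\p\cong R_\p^k$ by clearing denominators to get $M\xrightarrow{\psi}R^k\xrightarrow{\phi}M$ with $\phi\circ\psi=r\cdot\id_M$, deduce the cleaner intermediate statement $r\cdot\id_X=0$ in $\D_{\sg}(R)$ (i.e.\ $r\in\ann_RX$), and then conclude formally: a triangle whose first map is zero splits, so $X\para r\cong X\oplus\Sigma(X)$ and $X$ is a summand of $\Sigma^{-1}(X\para r)$. Your route avoids the $\Ext^1$ and cone bookkeeping at the cost of a more hands-on descent (note the paper's diagram also yields $r\in\ann_RX$, since there $r\cdot\id_X$ factors through the projective $P(X)$, though it is never phrased that way). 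Two harmless points to tidy: the images of the standard basis under $R_s^k\xrightarrow{\sim}M_s$ live in $M_s$, so they only admit preimages in $M$ after multiplying by powers of $s$, which are units in $R_s$ and so do not affect $\phi_s$ being an isomorphism; and the integer $n$ in your reduction must be taken large enough simultaneously for the truncation to be a shifted module and for the localized syzygy to be projective, which of course holds for $n\gg0$.
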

\begin{proof}
By choosing a projective resolution of $X$, we may assume $X$ is a bounded above complex of finitely generated projective $R$-modules with finitely many non-zero cohomologies. Then by taking brutal truncation, we conclude that $\Sigma^n(X)$ is isomorphic to a finitely generated $R$-module in $\D_{\sg}(R)$ for $ n\ll 0$. Combining with the assumption, we may assume $X$ is a finitely generated $R$-module and $X_\p$ is a free $R_\p$-module. 

Choose a projective resolution $\pi\colon P(X)\rightarrow X$, where $P(X)$ is a finitely generated projective $R$-module. The kernel of $\pi$ is the first syzygy of $X$, denoted $\Omega^1_R(X)$. Then we have $\Ext_R^1(X,\Omega_R^1(X))_\p=0$ as $X_\p$ is a free $R_\p$-module.
Since $\Ext^1_R(X,\Omega_R^1(X))$ is finitely generated over $R$, there is an element $r\notin \p$ such that $r\cdot\Ext_R^1(X,\Omega^1_R(X))=0$. That is, there exists a commutative diagram
\begin{equation}\label{diagram}
\xymatrix{
0\ar[r]&\Omega_R^1(X)\ar@{=}[d]\ar[r]^-{\left(\begin{smallmatrix}
0\\
1
\end{smallmatrix}\right)}& X\oplus \Omega_R^1(X)\ar[r]^-{(1,0)}\ar[d]& X\ar[r]\ar[d]^-r& 0\\
0\ar[r]& \Omega_R^1(X)\ar[r]  & P(X)\ar[r]^-{\pi}& X\ar[r]& 0
}
\end{equation}
in the category of $R$-modules.

Let $f$ denote the middle map $X\oplus \Omega_R^1(X)\rightarrow P(X)$ in (\ref{diagram}). The right square of (\ref{diagram}) induces a morphism $\varphi\colon \cone(f)\rightarrow X\para r$, where $\cone(f)$ is the cone of $f$.
It follows immediately from the snake lemma that $\varphi$ is a quasi-isomorphism. Hence there exists an exact triangle 
$$
X\oplus \Omega_R^1(X)\rightarrow P(X)\rightarrow X\para r \rightarrow \Sigma(X\oplus \Omega_R^1(X))
$$
in $\D^f(R)$. Thus in $\D_{\sg}(R)$, we get that $X\para r\cong \Sigma(X\oplus \Omega_R^1(X))$.  As required.
\end{proof}
\begin{lemma}\label{isomorphism}
$\D_{\sg}(R)_\p= \D_{\sg}(R)/\{X\in \D_{\sg}(R)\mid X_\p=0\in \D_{\sg}(R_\p)\}$. 
\end{lemma}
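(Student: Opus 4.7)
The statement unpacks to the identity of two thick subcategories of $\D_{\sg}(R)$: on the left (hidden inside the quotient $\D_{\sg}(R)_\p = \D_{\sg}(R)/\D_{\sg}(R)_{Z(\p)}$) sits $\D_{\sg}(R)_{Z(\p)}$, which by definition consists of complexes $X$ with $\Hom_{\D_{\sg}(R)}(X,X)_\q = 0$ for every $\q \subseteq \p$; on the right sits the kernel $\CC$ of the canonical localization functor $\D_{\sg}(R) \to \D_{\sg}(R_\p)$, i.e.\ those $X$ such that $X_\p$ is perfect over $R_\p$. My plan is to prove equality of these two thick subcategories by a double containment, and then the conclusion follows because Verdier quotients by equal subcategories agree.

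For the easy containment $\D_{\sg}(R)_{Z(\p)} \subseteq \CC$, I would invoke Lemma \ref{structure}, which gives the generation
\[
\D_{\sg}(R)_{Z(\p)} = \thick_{\D_{\sg}(R)}\bigl(Y \para r \mid Y \in \D_{\sg}(R),\ r \notin \p\bigr).
\]
Since $\CC$ is a thick subcategory, it suffices to check that each generator $Y \para r$ lies in $\CC$. But after localization at $\p$ the element $r$ becomes a unit, so multiplication by $r$ on $Y_\p$ is an isomorphism in $\D^f(R_\p)$, and its cone $(Y \para r)_\p$ is zero in $\D^f(R_\p)$, hence certainly zero in $\D_{\sg}(R_\p)$.

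The reverse containment $\CC \subseteq \D_{\sg}(R)_{Z(\p)}$ is where Lemma \ref{technique} does the real work. Given $X \in \CC$, i.e.\ $X_\p$ perfect over $R_\p$, Lemma \ref{technique} produces $r \notin \p$ such that $X$ is a direct summand of $\Sigma^{-1}(X \para r)$ in $\D_{\sg}(R)$. Thickness reduces the problem to showing $X \para r \in \D_{\sg}(R)_{Z(\p)}$. For this, I would apply the support inclusion \eqref{Koszul}, which gives
\[
\Supp_R \Hom_{\D_{\sg}(R)}(X \para r, X \para r) \subseteq V(r).
\]
For any $\q \subseteq \p$ we have $r \notin \q$, hence $\q \notin V(r)$, hence $\q \notin \Supp_R\Hom(X\para r,X\para r)$; this is precisely the condition defining $\D_{\sg}(R)_{Z(\p)}$.

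The principal obstacle is invoking Lemma \ref{technique} correctly: one must verify that the $r$ produced there indeed lies outside $\p$ (which it does by construction in that lemma), and one must remember that being a direct summand in $\D_{\sg}(R)$ is preserved by the already-established thick subcategory $\D_{\sg}(R)_{Z(\p)}$. Neither step is deep, but they rely on having the machinery of Section \ref{section2} in place; once that is set up, the argument above is essentially formal.
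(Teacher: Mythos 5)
Your proof is correct and follows essentially the same route as the paper: reduce to the equality $\D_{\sg}(R)_{Z(\p)}=\{X\mid X_\p=0 \text{ in }\D_{\sg}(R_\p)\}$, get the easy containment from the Koszul-object generation in Lemma \ref{structure} (the localized Koszul objects are acyclic since $r$ becomes a unit), and get the reverse containment from Lemma \ref{technique}. The only cosmetic difference is your final step: you verify $X\para r\in \D_{\sg}(R)_{Z(\p)}$ via the support inclusion \eqref{Koszul}, whereas the paper simply observes that $X\para r$ with $r\notin\p$ is already one of the generators listed in Lemma \ref{structure}.
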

\begin{proof} 
It is equivalent to show 
$$
\D_{\sg}(R)_{Z(\p)}=\{X\in \D_{\sg}(R)\mid X_\p=0 \in \D_{\sg}(R_\p)\}.
$$
From Lemma \ref{structure},
$
\D_{\sg}(R)_{Z(\p)}=\thick_{\D_{\sg}(R)}(X\para r\mid X\in \D_{\sg}(R), r\notin \p). 
$
For each $X\in \D_{\sg}(R)$ and $r\notin \p$, since $(X\para r)_\p$ is acyclic, we conclude that $\D_{\sg}(R)_{Z(\p)}\subseteq \{X\in \D_{\sg}(R)\mid X_\p=0\in \D_{\sg}(R_\p)\}.$

For the reverse inclusion, assume $X\in \D_{\sg}(R)$ and $X_\p=0$ in $\D_{\sg}(R_\p)$. Lemma \ref{technique} yields that $X\in \thick_{\D_{\sg}(R)}(X\para r)$ for some $r\notin \p$.  This completes the proof.
\end{proof}

\begin{lemma}\label{loc}
Let $R$ be a commutative noetherian ring. For objects $X,Y$ in $\D_{\sg}(R)$, there is a natural isomorphism
$$
\Hom_{\D_{\sg}(R)}(X,Y)_\p\cong \Hom_{\D_{\sg}(R_\p)}(X_\p,Y_\p).
$$
\end{lemma}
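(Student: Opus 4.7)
The plan is to combine Lemma \ref{structure} and Lemma \ref{isomorphism} with a roof-lifting argument analogous to the one underlying the equivalence $\D^f(R)_\p \cong \D^f(R_\p)$ recorded in the example preceding Lemma \ref{support}. Applying Lemma \ref{structure} with $\T = \D_{\sg}(R)$ furnishes a natural isomorphism $\Hom_{\D_{\sg}(R)}(X,Y)_\p \cong \Hom_{\D_{\sg}(R)_\p}(X,Y)$, and Lemma \ref{isomorphism} identifies $\D_{\sg}(R)_\p$ with the Verdier quotient of $\D_{\sg}(R)$ by the kernel of the localization functor $L\colon \D_{\sg}(R) \to \D_{\sg}(R_\p)$, $X \mapsto X_\p$. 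The universal property of the Verdier quotient then produces an induced functor $\bar L\colon \D_{\sg}(R)_\p \to \D_{\sg}(R_\p)$, and the lemma reduces to verifying that the map $\Hom_{\D_{\sg}(R)_\p}(X,Y) \to \Hom_{\D_{\sg}(R_\p)}(X_\p,Y_\p)$ induced by $\bar L$ is bijective.

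For surjectivity, start with a morphism in $\D_{\sg}(R_\p)$ represented by a roof $X_\p \xleftarrow{\beta} W \xrightarrow{h} Y_\p$ in $\D^f(R_\p)$ with $\cone(\beta) \in \per(R_\p)$. Using the essential surjectivity encoded in $\D^f(R)_\p \cong \D^f(R_\p)$, choose $W' \in \D^f(R)$ with $W'_\p \cong W$. Via the isomorphism $\Hom_{\D^f(R)}(W',-)_\p \cong \Hom_{\D^f(R_\p)}(W,(-)_\p)$, write $\beta = \beta'/s$ and $h = h'/s$ after passing to a common denominator $s \notin \p$, for morphisms $\beta'\colon W' \to X$ and $h'\colon W' \to Y$ in $\D^f(R)$. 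Since $\cone(\beta')_\p \cong \cone(\beta) \in \per(R_\p)$, the object $\cone(\beta')$ lies in $\ker L$, so $\beta'$ becomes invertible in $\D_{\sg}(R)_\p$. The class $h' \circ (\beta')^{-1}$ in $\D_{\sg}(R)_\p$, rescaled by the unit $s \in R_\p$ as needed, provides a preimage of the given morphism.

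For injectivity, suppose a class in $\Hom_{\D_{\sg}(R)_\p}(X,Y)$ represented by a roof $X \xleftarrow{\alpha} Z \xrightarrow{g} Y$ with $\cone(\alpha) \in \ker L$ maps to zero in $\D_{\sg}(R_\p)$. The calculus of fractions in $\D_{\sg}(R_\p)$ yields $\delta\colon W \to Z_\p$ in $\D^f(R_\p)$ with $\cone(\delta) \in \per(R_\p)$ and $g_\p \circ \delta = 0$ in $\D^f(R_\p)$. Lifting $\delta$ to some $\delta'\colon W' \to Z$ in $\D^f(R)$ via the same Hom isomorphism, the composition $g \circ \delta'$ localizes to zero, hence $r(g \circ \delta') = 0$ in $\D^f(R)$ for some $r \notin \p$. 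The cone of $r\delta'$ localizes to an object isomorphic to $\cone(\delta)$, so it lies in $\ker L$, and this refinement witnesses the original roof as zero in $\D_{\sg}(R)_\p$.

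The main obstacle is the scalar bookkeeping that arises when clearing denominators to lift morphisms from $\D^f(R_\p)$ to $\D^f(R)$, together with the recognition that the cones of such lifts—which typically fail to be perfect over $R$—must instead be shown to belong to the subcategory $\ker L$. This latter point is precisely the content of Lemma \ref{isomorphism}, whose placement immediately before this result makes the required identification readily available.
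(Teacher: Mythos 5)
Your argument is correct, and it reaches the statement by a genuinely different route than the paper. The paper defines the comparison map directly on fractions $s^{-1}(\alpha/\beta)$ of honest $\D_{\sg}(R)$-morphisms and checks bijectivity by hand: injectivity uses the density of $(-)_\p\colon\per(R)\to\per(R_\p)$ together with $\Hom_{\D^f(R)}(M,N)_\p\cong\Hom_{\D^f(R_\p)}(M_\p,N_\p)$, while surjectivity hinges on a direct application of Lemma \ref{technique}: since a lifted roof only has $\cone(g)_\p$ perfect (not $\cone(g)$), the Koszul-object trick is invoked to show $r/1\colon X\to X$ factors through $g/1$ in $\D_{\sg}(R)$, which is what converts the lifted roof into a genuine fraction over $R$ with denominator outside $\p$. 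You instead route everything through Lemma \ref{structure} and Lemma \ref{isomorphism}: identifying $\Hom_{\D_{\sg}(R)}(X,Y)_\p$ with $\Hom_{\D_{\sg}(R)_\p}(X,Y)$ and $\D_{\sg}(R)_\p$ with the quotient by the kernel of localization reduces the lemma to full faithfulness of the induced functor $\bar L$, and because morphisms in $\D_{\sg}(R)_\p$ already tolerate wrong-way maps whose cones are only perfect after localization, no fresh appeal to Lemma \ref{technique} is needed at this point (it enters only through the already-proved Lemma \ref{isomorphism}, which is legitimate since that lemma precedes this one). What your organization buys is that surjectivity becomes a routine lift-and-clear-denominators step; what it costs is an extra layer of Verdier-quotient bookkeeping, and two small points should be made explicit to close it: in the injectivity step you should invoke the identification $\D_{\sg}(R)_\p\cong \D^f(R)/\{C\in\D^f(R)\mid C_\p\in\per(R_\p)\}$ (quotient of a quotient by the preimage subcategory), so that the roof $X\xleftarrow{\alpha}Z\xrightarrow{g}Y$ may be taken with $\alpha$, $g$ honest morphisms of $\D^f(R)$ and $\cone(\alpha)_\p$ perfect—otherwise ``$g_\p\circ\delta=0$ in $\D^f(R_\p)$'' does not typecheck—and you should record that the composite of your two identifications is the natural localization map, so the resulting isomorphism is the asserted natural one.
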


\begin{proof}
We define the map $\pi\colon \Hom_{\D_{\sg}(R)}(X,Y)_\p\rightarrow \Hom_{\D_{\sg}(R_\p)}(X_\p,Y_\p)$ by sending $s^{-1}(\alpha/\beta)$ to $X_\p\xleftarrow{s\circ \beta_\p} Z_\p\xrightarrow{\alpha_\p} Y_\p$, where $s\notin\p$ and $\alpha/\beta$ is $X\xleftarrow \beta Z\xrightarrow \alpha Y$; here $\alpha,\beta$ are morphisms in $\D^f(R)$ and $\cone(\beta)$ is perfect over $R$.
The map is well-defined.

First we prove the map is injective. If $\pi(s^{-1}(\alpha/\beta))=0$, then $\alpha_\p$ factors through a perfect complex over $R_\p$. With the same argument in the proof of \cite[Lemma 3.9]{Letz}, one can verify that $(-)_\p\colon \per(R)\rightarrow \per(R_\p)$ is dense. Hence $\alpha_\p$ factors through $F_\p$, where $F\in \per(R)$. Since for $M,N\in \D^f(R)$
$$
\Hom_{\D^f(R)}(M,N)_\p\cong \Hom_{\D^f(R_\p)}(M_\p,N_\p),
$$
there exists $\gamma\colon Z\rightarrow F$ and $\eta \colon F\rightarrow Y$ in $\D^f(R)$ such that $\alpha_\p=t_1^{-1}\eta_\p\circ t_2^{-1}\gamma_\p$ for some $t_1,t_2\notin \p$. This implies that there exists $t\notin \p$ such that $tt_1t_2\alpha=t\eta\circ\gamma$. Since $tt_1t_2\notin\p$, we get that $s^{-1}(\alpha/\beta)=0$. Thus $\pi$ is injective.

Now we prove that the map is surjective. We just need to consider the map $X_\p\xleftarrow {g_\p} W_\p\xrightarrow {f_\p} Y_\p$ is in the image of $\pi$ for each $W\in \D^f(R)$, where $f\colon W\rightarrow Y$ in $\D^f(R)$, $g\colon W\rightarrow X$ in $\D^f(R)$, and $\cone(g)_\p$ is perfect over $R_\p$. Then Lemma \ref{technique} yields that $\cone(g)$ is a direct summand of $\Sigma^{-1}(\cone(g)\para r)$ in $\D_{\sg}(R)$ for some $r\notin \p$. Since the multiplication $r\colon \cone(g)\para r\rightarrow \cone(g)\para r$ is null-homotopy, $r/1\colon \cone(g)\para r\rightarrow \cone(g)\para r$ is zero in $\D_{\sg}(R)$. Hence $r/1\colon \cone(g)\rightarrow \cone(g)$ is also zero in $\D_{\sg}(R)$. Combining with the exact triangle $W\xrightarrow {g/1} X\rightarrow \cone(g)\rightarrow \Sigma(W)$ in $\D_{\sg}(R)$,
 we conclude that $r/1\colon X\rightarrow X$ factors through $g/1$ in $\D_{\sg}(R)$. Assume $r/1=g/1\circ h_1/h_2$, where $h_1/h_2$ is $X\xleftarrow {h_2} L \xrightarrow{h_1} W$ and $\cone(h_2)$ is perfect over $R$. This implies $r/1=(g\circ h_1)/h_2 $. Hence there exists a commutative diagram in $\D^f(R)$
 $$
 \xymatrix{
 & L\ar[rd]^-{g\circ h_1}\ar[ld]_-{h_2}& \\
 X& L^\prime\ar[r]^{rl}\ar[l]_-l\ar[u]_-{h_3}\ar[d]^l& X,\\
 & X\ar[ru]_-r\ar[lu]^-{1}& 
 }
 $$
 where $\cone(l)$ is perfect over $R$. Note that $g\circ h_1\circ h_3=rl$. As $\cone((rl)_\p)$ is perfect over $R_\p$, we get that $f_\p/g_\p=(f\circ h_1\circ h_3)_\p/(rl)_\p$. This morphism is precisely $\pi(r^{-1}(f\circ h_1\circ h_3/l))$. This completes the proof.
\end{proof}

\begin{corollary}\label{sin}
For a commutative noetherian ring $R$,  we have
$$
\D_{\sg}(R)_\p= \D_{\sg}(R)/\{X\mid X_\p=0 \text{ in } \D_{\sg}(R_\p)\}\cong \D_{\sg}(R_\p).
$$
\end{corollary}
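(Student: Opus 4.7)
The first identity in the display is precisely Lemma \ref{isomorphism}, so the task reduces to proving the triangulated equivalence
$$
\D_{\sg}(R)/\{X\mid X_\p=0\text{ in }\D_{\sg}(R_\p)\}\cong \D_{\sg}(R_\p).
$$
The plan is to produce the obvious comparison functor out of the left-hand quotient and verify it is an equivalence in the standard way, by showing it is fully faithful and essentially surjective.

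For the construction, the exact functor $(-)_\p\colon \D^f(R)\to \D^f(R_\p)$ sends perfect complexes to perfect complexes and so descends to a triangulated functor $\D_{\sg}(R)\to \D_{\sg}(R_\p)$. This descended functor visibly annihilates the thick subcategory $\{X\in \D_{\sg}(R)\mid X_\p=0\text{ in }\D_{\sg}(R_\p)\}$, so the universal property of Verdier quotients yields the desired triangulated functor $F$ on the quotient. Fully faithfulness of $F$ should then follow from the composite of natural isomorphisms
$$
\Hom_{\D_{\sg}(R)_\p}(X,Y)\cong \Hom_{\D_{\sg}(R)}(X,Y)_\p\cong \Hom_{\D_{\sg}(R_\p)}(X_\p,Y_\p),
$$
the first being Lemma \ref{structure} (applied to $\T=\D_{\sg}(R)$) and the second being Lemma \ref{loc}; one checks directly that this composite agrees with the action of $F$ on morphisms.

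For essential surjectivity, given $Z\in \D_{\sg}(R_\p)$, I would first replace $Z$ by a finitely generated $R_\p$-module up to shift: choose a resolution of $Z$ by finitely generated projective $R_\p$-modules and pass to a sufficiently high syzygy, so that in $\D_{\sg}(R_\p)$ one has an isomorphism $Z\cong \Sigma^N M$ for some integer $N$ and some finitely generated $R_\p$-module $M$. Then lift a finite presentation of $M$ to a presentation over $R$ by clearing denominators in the presenting matrix, obtaining a finitely generated $R$-module $\widetilde{M}$ with $\widetilde{M}_\p\cong M$. Then $F(\Sigma^N \widetilde{M})\cong Z$.

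The main obstacle is the essential surjectivity step: one has to verify carefully, without any Cohen-Macaulay or isolated singularity hypothesis, that every object of $\D_{\sg}(R_\p)$ really can be represented up to shift by a finitely generated $R_\p$-module via syzygy truncation, and one has to check the denominator-clearing step produces the claimed lift over $R$. Fully faithfulness, by contrast, should fall out as a formal combination of Lemmas \ref{structure} and \ref{loc}, requiring only bookkeeping to match the composite natural isomorphism with $F$ itself.
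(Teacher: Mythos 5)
Your argument is correct and structurally the same as the paper's: the first equality is Lemma \ref{isomorphism}, the comparison functor is the one induced by $(-)_\p$ through the universal property of the Verdier quotient, and full faithfulness is exactly the combination of Lemma \ref{structure} and Lemma \ref{loc}. The only divergence is the density step: the paper disposes of it by citing \cite[Lemma 3.9]{Letz}, whereas you prove it directly, and the worry you flag there is unfounded. Over any commutative noetherian ring --- no Cohen--Macaulay or isolated singularity hypothesis enters --- every object of $\D_{\sg}(R_\p)$ is isomorphic, up to shift, to a finitely generated $R_\p$-module: take a bounded above resolution by finitely generated projectives, truncate below the lowest nonzero cohomological degree, and replace the truncated piece by the cokernel of its last differential; the discarded piece is a bounded complex of finitely generated projectives, hence vanishes in the singularity category. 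This is precisely the reduction already used in the proof of Lemma \ref{technique}, so it is available verbatim over $R_\p$. Clearing denominators in a finite presentation then lifts the resulting module $M$ to a finitely generated $R$-module $\widetilde{M}$ with $\widetilde{M}_\p\cong M$, since scaling columns of the presenting matrix by elements outside $\p$ does not affect the localized cokernel; hence your functor hits $\Sigma^N\widetilde{M}\mapsto Z$ as claimed. So your self-contained density argument is sound, and what it buys is independence from the external reference, at the cost of a few lines. As for matching the composite isomorphism $\Hom_{\D_{\sg}(R)_\p}(X,Y)\cong\Hom_{\D_{\sg}(R)}(X,Y)_\p\cong\Hom_{\D_{\sg}(R_\p)}(X_\p,Y_\p)$ with the action of your functor: this is immediate from the construction in Lemma \ref{loc}, whose map is defined by applying $(-)_\p$ to roofs, which is exactly how the induced functor acts on morphisms; the paper records this step at the same level of detail as you do.
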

\begin{proof}
The first equation is from Lemma \ref{isomorphism}. Combining with this, the localization functor $\D_{\sg}(R)\rightarrow \D_{\sg}(R_\p)$ induces a triangle functor
$\pi\colon\D_{\sg}(R)_\p\rightarrow \D_{\sg}(R_\p)$. $\pi$ is fully faithful by Lemma \ref{structure} and Lemma \ref{loc}. By \cite[Lemma  3.9]{Letz}, $\pi$ is dense. Thus $\pi$ is an equivalence.
\end{proof}
\begin{remark}
(1) When $R$ is a Gorenstein local ring, the second equivalence above was proved by Matsui \cite[Lemma 4.12]{Matsui} using a different method. 

(2) 
 Let $X$ be a finitely generated $R$-module. Since $\pd_R(X)<\infty$ if and only if $X=0$ in $\D_{\sg}(R)$, Corollary \ref{module} and Corollary \ref{sin} yield that 
 $$
 \{\p\in \Spec(R)\mid \pd_{R_\p}(X_\p)=\infty\}=V(\ann_R\Hom_{\D_{\sg}(R)}(X,X)).
 $$
 In particular, the set $\{\p\in\Spec(R)\mid \pd_{R_\p}(M_\p)<\infty\}$ is Zariski open; this is proved in \cite[Lemma 4.5]{BM}.
\end{remark}

Let $\Sing(R)$ denote the \emph{singular locus} of $R$. That is, $\Sing(R)\colonequals\{\p\in \Spec(R)\mid R_\p\text{ is not regular}\}.$

\begin{theorem}\label{locus}
Let $R$ be a commutative noetherian ring. If $\dim\D_{\sg}(R)<\infty$, then
$$
\Sing (R)=V(\ann_R\D_{\sg}(R)).
$$
In particular, in this case $\Sing(R)$ is a closed subset.
\end{theorem}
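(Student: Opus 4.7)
The plan is to instantiate the general framework of Section~\ref{section2} with $\T = \D_{\sg}(R)$ and combine it with the localization result for singularity categories established just above.

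First I would apply Proposition~\ref{theorem} to the essentially small $R$-linear triangulated category $\D_{\sg}(R)$. Since $\dim \D_{\sg}(R) < \infty$ by hypothesis, the proposition yields
$$\{\p \in \Spec R \mid \D_{\sg}(R)_\p \neq 0\} = V(\ann_R \D_{\sg}(R)).$$
Next, I would use Corollary~\ref{sin}, which identifies the localization $\D_{\sg}(R)_\p$ with the singularity category $\D_{\sg}(R_\p)$ of the localized ring. Substituting, the left-hand side becomes $\{\p \in \Spec R \mid \D_{\sg}(R_\p) \neq 0\}$.

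The final step is to identify this set with $\Sing(R)$. By definition, $\D_{\sg}(R_\p) = 0$ if and only if every object of $\D^f(R_\p)$ is perfect, which is equivalent to every finitely generated $R_\p$-module having finite projective dimension. By the Auslander--Buchsbaum--Serre characterization of regularity (together with the converse recalled in Example~\ref{regular}), this holds if and only if $R_\p$ is regular. Hence $\D_{\sg}(R_\p) \neq 0$ if and only if $\p \in \Sing(R)$, giving the desired equality. The ``in particular'' clause is immediate since $V(\ann_R \D_{\sg}(R))$ is Zariski closed by definition.

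There is no real obstacle in the argument itself: the content has been front-loaded into Proposition~\ref{theorem} and Corollary~\ref{sin}, and the theorem is essentially an assembly of these with the classical regularity criterion. The one point that deserves a line of care is the identification of $\D_{\sg}(R_\p) = 0$ with regularity of $R_\p$, since the triangulated category being trivial must be unwound through the definition of perfect complexes and then matched with finite global dimension.
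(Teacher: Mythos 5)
Your proposal is correct and follows essentially the same route as the paper: the paper's proof likewise combines Proposition~\ref{theorem} applied to $\D_{\sg}(R)$ with Corollary~\ref{sin} to identify $\D_{\sg}(R)_\p$ with $\D_{\sg}(R_\p)$, and then observes that nonvanishing of $\D_{\sg}(R_\p)$ is equivalent to $\p\in\Sing(R)$. Your extra care in unwinding triviality of $\D_{\sg}(R_\p)$ through Auslander--Buchsbaum--Serre matches the discussion the paper delegates to Example~\ref{regular}.
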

\begin{proof}
For each prime ideal $\p$ of $R$, by Corollary \ref{sin} we get that $\D_{\sg}(R)_\p\neq 0$ if and only if $\D_{\sg}(R_\p)\neq 0$. This is equivalent to $\p\in \Sing(R)$. Thus the desired result follows immediately from Proposition \ref{theorem}.
\end{proof}

\begin{remark}\label{finite}
Let $R$ be a localization of a finitely generated algebra over a field or an equicharacteristic excellent local ring. 
It is proved by Iyengar and Takahashi that $\dim \D^f(R)<\infty$; see \cite[Corollary 7.2]{IT2016}. In particular,  $\dim \D_{\sg}(R)<\infty$. 

In this case, Iyengar and Takahashi \cite[5.3 and 5.4]{IT2016}  proved that the cohomological annihilator  (see \ref{coh}), denoted $\ca(R)$, defines the singular locus of $R$. Combining with Theorem \ref{locus}, we conclude that $\ca(R)$ is equal to $\ann_R \D_{\sg}(R)$ up to radical. We will give a more precise relation between them in Proposition \ref{relation}.

\end{remark}

\begin{chunk}\label{jacobian}
Let $R$ be a finitely generated algebra over a field $k$ (resp.an equicharacteristic complete local ring).
Then $R\cong k[x_1,\ldots,x_n]/(f_1,\ldots,f_c)$ (resp. $R\cong  k\llbracket x_1,\ldots,x_n\rrbracket/(f_1,\ldots,f_c)$ by Cohen's structure theorem, where $k$ is the residue field of $R$). Denote by $h$ the height of the ideal $(f_1,\ldots,f_c)$ in $k[x_1,\ldots,x_n]$ (resp. $k\llbracket x_1,\ldots,x_n\rrbracket$). More precisely, $h=n-\dim (R)$; see \cite[Theorem I 1.8A]{Hartshorne} (resp.  \cite[Corollary 2.1.4]{BH}). The \emph{Jacobian ideal} of $R$, denoted $\jac(R)$,  is defined to be the ideal of $R$ generated by all $h\times h$ minors of the Jacobian matrix $$\del(f_1,\ldots,f_c)/\del(x_1,\ldots,x_n).$$
\end{chunk}

Recall that a commutative noetherian ring is called \emph{equidimensional} provided that $\dim R/\p=\dim R/\q<\infty$  for all minimal prime ideals $\p,\q$ of $R$.
\begin{corollary}\label{main result}
Let $R$ be either an equidimensional finitely generated $k$-algebra over a perfect field $k$, or an equidimensional equicharacteristic complete local ring with a perfect residue field. Then
 $$
 \sqrt{\jac(R)}=\sqrt{\ann_R\D_{\sg}(R)}.
 $$
 In particular, $\jac(R)^s$ annihilates the singularity category of $R$ for some integer $s$.
\end{corollary}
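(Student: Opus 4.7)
The plan is to identify both $V(\jac(R))$ and $V(\ann_R \D_{\sg}(R))$ with the singular locus $\Sing(R)$, and then use the standard bijection between closed subsets of $\Spec(R)$ and radical ideals.

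First I would check that $\dim \D_{\sg}(R) < \infty$ under either hypothesis on $R$. Both cases fall into the setting of Remark \ref{finite}: for a finitely generated algebra over a field, or an equicharacteristic complete local ring, Iyengar--Takahashi give $\dim \D^f(R) < \infty$, and since $\D_{\sg}(R)$ is a Verdier quotient of $\D^f(R)$, it inherits a strong generator and hence has finite dimension. Theorem \ref{locus} then applies and yields the identification
\[
\Sing(R) = V(\ann_R \D_{\sg}(R)).
\]

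Second, I would invoke the Jacobian criterion to identify $\Sing(R) = V(\jac(R))$. In the affine case this is the classical statement over a perfect field. In the complete local case, Cohen's structure theorem (as used in \ref{jacobian}) presents $R$ as $k\lb x_1,\dots,x_n \rb / (f_1,\dots,f_c)$, and one applies the Jacobian criterion for power series rings; here the perfect residue field is used so that regularity can be detected fiberwise by the rank of the Jacobian, and the equidimensionality assumption ensures that the integer $h = n - \dim(R)$ agrees with $\het(f_1,\dots,f_c)$ at every minimal prime, so that the $h \times h$ minors correctly define the non-smooth locus inside $\Spec(R)$. Combining the two equalities of closed sets gives $V(\jac(R)) = V(\ann_R \D_{\sg}(R))$, and hence $\sqrt{\jac(R)} = \sqrt{\ann_R \D_{\sg}(R)}$.

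For the ``in particular'' clause, since $R$ is noetherian the radical ideal $\sqrt{\jac(R)}$ is finitely generated, so some power $\sqrt{\jac(R)}^s$ is contained in $\ann_R \D_{\sg}(R)$, and a fortiori $\jac(R)^s \subseteq \ann_R \D_{\sg}(R)$.

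The main obstacle is the correct application of the Jacobian criterion in the complete local case: one must ensure that the minors taken inside the ambient power series ring cut out precisely the singular locus of the quotient $R$. Both the equidimensionality and the perfect residue field hypotheses are essential at this step, and without them the inclusion $V(\jac(R)) \subseteq \Sing(R)$ can fail (cf.\ Example \ref{fail} referenced in the introduction). Once this criterion is granted, the remainder of the argument is a straightforward combination with Theorem \ref{locus}.
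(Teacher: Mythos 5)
Your proposal is correct and follows essentially the same route as the paper: finite dimension of $\D_{\sg}(R)$ via the Iyengar--Takahashi result (Remark \ref{finite}), Theorem \ref{locus} to get $\Sing(R)=V(\ann_R\D_{\sg}(R))$, the Jacobian criterion (where perfectness and equidimensionality enter) to get $\Sing(R)=V(\jac(R))$, and noetherianness for the power statement. No gaps to flag.
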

\begin{proof}
The last statement follows immediately from the first one.

In both cases, $\jac(R)$ defines the singular locus of $R$. That is,
$$
\Sing(R)=V(\jac(R)).
$$
Indeed, the affine case can see \cite[Corollary 16.20]{Ei}. The local case can combine \cite[Lemma 2.10]{IT2016} and \cite[Proposition 4.4, Proposition 4.5, and Theorem 5.4]{Wang1994}.

From Remark \ref{finite}, 
$
\dim \D_{\sg}(R)<\infty.
$
Combining with this,  Theorem \ref{locus} implies that
$$
\Sing(R)=V(\ann_R\D_{\sg}(R)).
$$
By above two equations, we have
$$
V(\jac(R))=V(\ann_R\D_{\sg}(R)).
$$
This implies the desired result. 
\end{proof}
\begin{remark}
(1) When $R$ is an equicharacteristic Cohen-Macaulay local ring over a field, it turns out that $\jac(R)$ annihilates the singularity category of $R$; see \cite{IT2021}.

(2) Corollary \ref{main result}  fails without equidimensional assumption; see Example \ref{fail}. The example also shows that the power of the Jocobian ideal doesn't annihilate the singularity category without equidimensional assumption.
\end{remark}
\begin{example}\label{fail}
Let $R=k[x,y,z,w]/(x^2,yz,yw)$ (resp. $k\llbracket x,y,z,w\rrbracket/(x^2,yz,yw)$), where $k$ is a field with characteristic $0$. This is not equidimensional.  
Consider the prime ideal $\p=(\overline{x},\overline{z},\overline{w})$ of $R$. Note that $R_\p$ is not regular.  
Thus by Lemma \ref{basic result} and Corollary \ref{sin}, we get that
\begin{equation}\label{345}
    \p\in \Sing(R)\subseteq V(\ann_R\D_{\sg}(R)).
\end{equation}
In particular, $\ann_R\D_{\sg}(R)\subseteq \p$.

The height of $(x^2,yz,yw)$ in $k[x,y,z,w]$ (resp. $k\llbracket x,y,z,w\rrbracket$) is $2$. Then it is easy to compute that
$$\jac(R)=(\overline{xy},\overline{xz},\overline{xw},\overline{y^2}).$$
Combining (\ref{345}) with $\jac(R)\nsubseteq \p$, we conclude that
$$
\jac(R)\nsubseteq \sqrt{\ann_R\D_{\sg}(R)}.
$$
\end{example}

\section{Comparison with the cohomological annihilator}\label{section4}
In this section, we compare the annihilator of the singularity category with the cohomological annihilator. The main result of this section is Proposition \ref{t2} from the introduction. Using this result, we calculate an example of the annihilator of the singularity category at the end of this section.

\begin{chunk}\label{coh}
For each $n\in \N$, following Iyengar and Takahashi \cite[Definition 2.1]{IT2016}, the $n$-th \emph{cohomological annihilator} of $R$ is defined to be
$$
\ca^n(R):=\ann_R \Ext_R^{ n}(R\text{-}\mo,R\text{-}\mo),
$$
where $R\text{-}\mo$ is the category of finitely generated $R$-modules.
In words, $\ca^n(R)$ consists of elements $r$ in $R$ such that $r\cdot \Ext^n_R(M,N)=0$ for all finitely generated $R$-modules $M,N$. 
The \emph{cohomological annihilator} of $R$ is defined to be 
$$
\ca(R)\colonequals\bigcup_{n\geq 0}\ca^n(R).
$$
It is proved that $\ca^n(R)$ is equal to the ideal $\ann_R\Ext^{\geq n}_R(R\text{-}\mo,R\text{-}\mo)$. In particular, there is an ascending chain of ideals  $0=\ca^0(R)\subseteq \ca^1(R)\subseteq \ca^2(R)\subseteq\cdots$. As $R$ is noehterian, there exists $N\in \N$ such that $\ca(R)=\ca^n(R)$ for all $n\geq N$. 

It is not difficult to verify that there is an inclusion
$$
   \Sing(R)\subseteq V(\ca(R)); 
$$
see \cite[Lemma 2.10]{IT2016}.
\end{chunk}

\begin{chunk}
Let $R$ be a strongly Gorenstein ring, i.e. $R$ has finite injective dimension as $R$-module.  It is proved by Esentepe \cite[Lemma 2.3]{Esentepe} that in this case
$$
\ca(R)=\ann_R\D_{\sg}(R).
$$
Combining with this result, if furthermore $\dim \D_{\sg}(R)<\infty$, then Theorem \ref{locus} yields that 
\begin{equation}\label{123}
    \Sing(R)=V(\ca(R)).
\end{equation}

When $R$ is a Gorenstein local ring and $\dim\D_{\sg}(R)<\infty$, (\ref{123}) was proved by Bahlekeh, Hakimian, Salarian, and Takahashi  \cite[Theorem 3.3]{BHST}.
\end{chunk}

It is natural to ask: what is the relation of $\ca(R)$ and $\ann_R\D_{\sg}(R)$ when $R$ is not Gorenstein? It turns out that they are equal up to radical under some mild assumptions.

\begin{proposition}\label{relation}
Let $R$ be a commutative noetherian ring. Then

(1) $\ca(R)\subseteq \ann_R \D_{\sg}(R)$.

 (2) If furthermore $R$ is either a localization of a finitely generated algebra over a field or an equicharacteristic excellent local ring, then 
 $$
\sqrt{\ca(R)}=\sqrt{ \ann_R\D_{\sg}(R)}.
$$
\end{proposition}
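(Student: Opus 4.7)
The plan is to dispatch part (2) quickly from ideas already in the excerpt, and to devote the real work to part (1). For (2), the hypotheses on $R$ guarantee via Remark \ref{finite} that $\dim \D_{\sg}(R) < \infty$, so Theorem \ref{locus} yields $V(\ann_R \D_{\sg}(R)) = \Sing(R)$. Since the Iyengar--Takahashi result cited in Remark \ref{finite} also gives $V(\ca(R)) = \Sing(R)$, the two ideals have the same vanishing locus, hence the same radical, which is exactly the assertion of (2).

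For part (1), I would fix $r \in \ca^n(R)$ and aim to show that the homothety $r\cdot \id_X$ vanishes in $\End_{\D_{\sg}(R)}(X)$ for every $X$; by the compatibility built into $R$-linearity, this is equivalent to $r \in \ann_R X$. A first reduction: every object of $\D_{\sg}(R)$ is isomorphic, after a negative shift, to a finitely generated $R$-module (truncate a bounded-above projective resolution at a sufficiently negative degree, cf.\ \ref{def of syzygy}), so I may assume $X = M$ is a finitely generated module.

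The main idea is then to exploit the Yoneda class $[\varepsilon] \in \Ext^n_R(M, \Omega_R^n(M))$ given by the spliced exact sequence $0 \to \Omega_R^n(M) \to P_{n-1} \to \cdots \to P_0 \to M \to 0$. Under the standard identification $\Ext^n_R(M, \Omega_R^n(M)) = \Hom_{\D^f(R)}(M, \Sigma^n \Omega_R^n(M))$, it corresponds to a morphism $\varepsilon \colon M \to \Sigma^n \Omega_R^n(M)$ in $\D^f(R)$. Two features of $\varepsilon$ conclude the argument. First, each short exact sequence $0 \to \Omega_R^{i+1}(M) \to P_i \to \Omega_R^i(M) \to 0$ gives a triangle whose middle term is perfect; the induced connecting morphism $\Omega_R^i(M) \to \Sigma \Omega_R^{i+1}(M)$ is therefore an isomorphism in $\D_{\sg}(R)$, and composing these realises $\varepsilon$ as the canonical iso $M \cong \Sigma^n \Omega_R^n(M)$ in the Verdier quotient. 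Second, since $r \in \ca^n(R)$, the identity $r \cdot [\varepsilon] = 0$ holds in $\Ext^n_R(M, \Omega_R^n(M))$; equivalently, $\varepsilon \circ r_M = 0$ in $\D^f(R)$, where $r_M$ denotes the homothety. Passing to $\D_{\sg}(R)$ and cancelling the iso $\varepsilon$ from the left yields $r_M = 0$ in $\End_{\D_{\sg}(R)}(M)$, as required.

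The step I expect to be the main obstacle is the bookkeeping that identifies the Yoneda class $[\varepsilon]$ with the canonical isomorphism $M \cong \Sigma^n \Omega_R^n(M)$ in $\D_{\sg}(R)$ — that is, recognising the Yoneda product of the connecting morphisms of the short sequences $0 \to \Omega_R^{i+1}(M) \to P_i \to \Omega_R^i(M) \to 0$ as (up to sign) the class of the spliced $n$-extension. This is a standard fact about the correspondence between Yoneda extensions and morphisms in the derived category, but one must track the calculus of fractions carefully, first in $\D^f(R)$ and then along the quotient to $\D_{\sg}(R)$, to ensure the identification survives.
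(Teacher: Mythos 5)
Your proposal is correct. Part (2) is exactly the paper's argument: finiteness of $\dim\D_{\sg}(R)$ from Remark \ref{finite} plus Theorem \ref{locus} gives $V(\ann_R\D_{\sg}(R))=\Sing(R)$, and the Iyengar--Takahashi identification $V(\ca(R))=\Sing(R)$ finishes it. Part (1) reaches the same conclusion by a somewhat different mechanism. The paper first replaces $X$ by an $(n-1)$-st syzygy $\Omega_R^{n-1}(Y)$, so that the hypothesis $r\in\ca^n(R)$ becomes the vanishing of $r\cdot\Ext^1_R(X,\Omega^1_R(X))$; this produces (as in Lemma \ref{technique}) a lift of the homothety $r_X$ through the projective cover $P(X)\to X$, whence $r_X=0$ in $\D_{\sg}(R)$. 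You instead keep $M$ arbitrary and work in degree $n$: the class of the spliced $n$-extension gives $\varepsilon\in\Hom_{\D^f(R)}(M,\Sigma^n\Omega_R^n(M))\cong\Ext^n_R(M,\Omega_R^n(M))$, which is killed by $r$, while $\varepsilon$ becomes invertible in $\D_{\sg}(R)$, so $r_M=0$ after cancelling. Both arguments hinge on the same $\Ext$-vanishing; the paper's version is more elementary (only $\Ext^1$ and a diagram chase), yours is more derived-categorical and avoids the syzygy reduction. Note also that the ``bookkeeping'' you flag as the main obstacle can be sidestepped entirely: there is no need to identify $\varepsilon$ with the Yoneda class of the splice. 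Simply \emph{define} $\varepsilon$ as the composite of the (shifted) connecting morphisms of the triangles coming from $0\to\Omega_R^{i+1}(M)\to P_i\to\Omega_R^i(M)\to 0$; it is then invertible in $\D_{\sg}(R)$ by construction (each $P_i$ is perfect), and its class still lies in $\Hom_{\D^f(R)}(M,\Sigma^n\Omega_R^n(M))\cong\Ext^n_R(M,\Omega_R^n(M))$, which is all that is needed to conclude $r\varepsilon=0$; any sign discrepancy with the splice class is irrelevant since an isomorphism up to sign is still an isomorphism.
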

\begin{proof}
(1) It is equivalent to show that $\ca^n(R)\subseteq \ann_R \D_{\sg}(R)$ for all $n\geq 1 $. For each $r\in \ca^n(R)$ and $X\in \D_{\sg}(R)$, we want to show the multiplication $r\colon X\rightarrow X$ is zero in $\D_{\sg}(R)$. In order to prove this, we may assume $X\cong \Omega^{n-1}_R(Y)$ for some $R$-module $Y$, where $\Omega^{n-1}_R(Y)$ is a $(n-1)$-th syzygy of the $R$-module $Y$; see the argument in the proof of Lemma  \ref{technique}.

Choose a short exact sequence
$$
0\rightarrow \Omega_R^1(X)\rightarrow P(X)\xrightarrow \pi X\rightarrow 0,
$$
where $\pi$ is a projective resolution of $X$.
Note that 
$$\Ext^1_R(X,\Omega^1_R(X))\cong \Ext^{n}_R(Y,\Omega^1_R(X)).$$ 
Combining with the assumption, we get that $r\cdot \Ext^1_R(X,\Omega^1_R(X))=0$. This will imply that $r\colon X\rightarrow X$ factors through the morphism $\pi$. In particular, $r\colon X\rightarrow X$ factors through the projective module $P(X)$. Thus $r\colon X\rightarrow X$ is zero in $\D_{\sg}(R)$. As required. 

(2) By Theorem \ref{locus} and Remark \ref{finite}, we have
$$
V(\ca(R))=\Sing(R)=V(\ann_R\D_{\sg}(R)).
$$
This yields 
$
\sqrt{\ca(R)}=\sqrt{\ann_R\D_{\sg}(R)}.
$
\end{proof}
\begin{corollary}\label{socle}
Let $R$ be a commutative noetherian local ring. Then the socle of $R$ annihilates the singularity category of $R$.
\end{corollary}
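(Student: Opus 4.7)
The plan is to chain two inclusions that are already available in the paper. Proposition \ref{relation}(1) supplies $\ca(R) \subseteq \ann_R \D_{\sg}(R)$ for any commutative noetherian ring, so it is enough to verify that $\soc(R) \subseteq \ca(R)$ when $R$ is local. This latter containment is precisely the result of Iyengar and Takahashi recalled in the paragraph preceding the corollary; see \cite{IT2016}. Composing the two gives
\[
\soc(R) \subseteq \ca(R) \subseteq \ann_R \D_{\sg}(R),
\]
which is the desired statement.

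The genuinely nontrivial input, and the step I expect to be the main obstacle, is the inclusion $\soc(R) \subseteq \ca(R)$, which is handled by \cite{IT2016}. The underlying idea is as follows: for $r \in \soc(R)$ one has $r\m = 0$, so multiplication by $r$ on any finitely generated $R$-module lands in the socle of that module. Combining this with the fact that in a minimal free resolution $F_\bullet \to M$ the differentials have entries in $\m$, a short diagram chase shows that $r \cdot \Ext^i_R(M,N) = 0$ for every finitely generated $N$ and every sufficiently large $i$, so that $r \in \ca^i(R) \subseteq \ca(R)$. Once this input is granted, the corollary is immediate from Proposition \ref{relation}(1), requiring no further computation.
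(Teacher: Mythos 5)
Your proposal is correct and is essentially identical to the paper's own proof: the paper likewise combines the inclusion $\soc(R)\subseteq\ca(R)$ from Iyengar--Takahashi (\cite[Example 2.6]{IT2016}) with Proposition \ref{relation}(1) to obtain $\soc(R)\subseteq\ca(R)\subseteq\ann_R\D_{\sg}(R)$. Your sketch of why the socle lies in the cohomological annihilator is a reasonable account of the cited input, but it plays the same role as the citation in the paper.
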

\begin{proof}
It is proved that the cohomological annihilator contains the socle of $R$; see \cite[Example 2.6]{IT2016}. The desired result follows immediately from Proposition \ref{relation}.
\end{proof}
\begin{example}
Let $R=k[x,y,z,w]/(x^2,yz,yw)$ (resp. $k\llbracket x,y,z,w\rrbracket/(x^2,yz,yw)$), where $k$ is a field with characteristic $0$. This is not equidimensional.  Combining Example \ref{fail} with Proposition \ref{relation}, we conclude that
$$
\jac(R)\nsubseteq \sqrt{\ca(R)}=\sqrt{\ann_R\D_{\sg}(R)}.
$$
\end{example}
\begin{remark}
The above example also shows that \cite[Theorem 1.1]{IT2021} need not hold without the equidimensional assumption.
\end{remark}

At the end of this section, we calculate an example of the annihilator of the singularity category. The ring considered in the following is not Cohen-Macaulay. 
\begin{example}\label{example}
Let $R=k\llbracket x,y\rrbracket /(x^2,xy)$, where $k$ is a field. We show 
$$
\jac(R)=\ca(R)=\ann_R \D_{\sg}(R)=(\overline{x},\overline{y}).
$$

First $\jac(R)=(\overline{x},\overline{y})$ is clear. By Example \ref{regular} and Proposition \ref{relation}, the desired result follows from $\ca(R)=(\overline{x},\overline{y})$. Since $\overline{x}$ lies in the socle of $R$, Remark \ref{socle} yields that $\overline{x}\in\ca(R) $. It remains to prove $\overline{y}\in \ca(R)$.
For any finitely generated $R$-module $M$, we claim
$
\overline{y}\cdot \Ext^3_R(M,-)=0.
$
This will imply $\overline{y}\in \ca^3(R)\subseteq \ca(R)$. 

Since there is an isomorphism $\Ext_R^3(M,-)\cong \Ext^2_R(\Omega_R^1(M),-)$, it is equivalent to show $\overline{y}\cdot \Ext_R^2(\Omega_R^1(M),-)=0$.
We observe $\overline{x}\cdot \Omega_R^1(M)=0$; see \ref{def of syzygy}. Thus $\Omega_R^1(M)$ is a finitely generated module over $R/(\overline{x})\cong k\llbracket y\rrbracket$.  It follows from the structure theorem of finitely generated modules over PID that $\Omega_R^1(M)$ is a finite direct sum of these modules: $R/(x), R/(x,y^n), n\geq 1$. Hence the claim follows if $\overline{y}\cdot\Ext^2_R(R/(x),-)=0=\overline{y}\cdot \Ext^2_R(R/(x,y^n),-)$ for all $n\geq 1$. The proof $\overline{y}\cdot\Ext^2_R(R/(x),-)=0$ is easier than $\overline{y}\cdot \Ext^2_R(R/(x,y^n),-)=0$. We prove the latter one for example.
The minimal free resolution of $R/(x,y^n)$ is
$$
\cdots \rightarrow R^5\xrightarrow{\begin{pmatrix}
x& y& 0& 0& 0\\
0& 0& x& 0& 0\\
0& 0& 0& x& y
\end{pmatrix}}R^3\xrightarrow{\begin{pmatrix}
x& y& 0\\
0& 0& x
\end{pmatrix}}R^2\xrightarrow{(x,y^n)}R\rightarrow 0.
$$
Hence for each $R$-module $N$, $\Ext_R^2(R/(x,y^n),N)$ is the second cohomology of 
$$
0\rightarrow N\xrightarrow{\begin{pmatrix}
x\\y^n
\end{pmatrix}} N^2\xrightarrow{\begin{pmatrix}
x& 0\\
y& 0\\
0& x
\end{pmatrix}} N^3\xrightarrow{\begin{pmatrix}x& 0&0\\
y& 0& 0\\
0& x& 0\\
0& 0& x\\
0& 0& y
\end{pmatrix}} N^5\rightarrow \cdots.
$$
If $(a,b,c)^T\in N^3$ is a cycle, then we get that $ya=yc=xb=0$. This implies
$$y\cdot\begin{pmatrix}
a\\
b\\
c
\end{pmatrix}=\begin{pmatrix}
0\\
yb\\
0
\end{pmatrix}=\begin{pmatrix}
x& 0\\
y& 0\\
0& x
\end{pmatrix}\begin{pmatrix}
b\\0
\end{pmatrix}.$$
In particular, $y\cdot(a,b,c)^T$ is a boundary.  Thus $\overline{y}\cdot\Ext^2_R(R/(x,y^n),N)=0 $. 
\end{example}

\section{Upper bound for dimensions of the singularity category}

The main result of this section is Theorem \ref{t3} from the introduction which gives an upper bound for the dimension of the singularity category of an equicharacteristic excellent local ring with isolated singularity. As mentioned in the introduction, it builds on ideas from  Dao and Takahashi's work \cite[Theorem 1.1(2) (a)]{DT2015} and extends their result; see  Remark \ref{connection}.


\begin{lemma}\label{theorem1}
 Let $(R,\m)$ be a commutative noehterian local ring and $\T$ be an essentially small $R$-linear triangulated category. Then the following are equivalent.

(1) $\{\p\in \Spec(R)\mid \T_\p\neq 0\}\subseteq\{\m\}.$

(2) For each $X\in \T$, there exists $j\in \N$ such that $\m^j\subseteq \ann_{R}X$.

(3) For each $X\in \T$, there exists an $\m$-primary ideal $(\bm{f})\colonequals(f_1,\ldots,f_l)$ such that $X\in \thick_{\T}(X\para \bm{f})$.
\end{lemma}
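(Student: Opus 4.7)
The plan is to prove the equivalences by a cycle $(1) \Rightarrow (2) \Rightarrow (3) \Rightarrow (1)$, drawing on the localization results from Section~\ref{section2} together with the splitting behavior of Koszul objects when the element annihilates the identity. For $(1) \Rightarrow (2)$, fix $X \in \T$; combining Corollary~\ref{module} with the hypothesis gives
$$
V(\ann_R X) = \{\p \in \Spec(R) \mid X \neq 0 \text{ in } \T_\p\} \subseteq \{\p \in \Spec(R) \mid \T_\p \neq 0\} \subseteq \{\m\},
$$
so $\ann_R X$ is either all of $R$ or an $\m$-primary ideal, and noetherianity of $R$ supplies $j \in \N$ with $\m^j \subseteq \ann_R X$.

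For $(2) \Rightarrow (3)$, pick any generating sequence $\bm{f} = f_1, \dots, f_l$ of $\m^j$, so that $(\bm{f}) = \m^j$ is $\m$-primary. Since each $f_i \in \ann_R X$, the morphism $\phi_X(f_i)\colon X \to X$ is zero, so the triangle defining $X \para f_1$ splits and yields $X \para f_1 \cong X \oplus \Sigma X$. Because $\Sigma$ is $R$-linear, $\ann_R(X \oplus \Sigma X) = \ann_R X$, and the same argument applies to $X \para f_1$ with the element $f_2$, and so on inductively. Thus $X \para \bm{f}$ is a direct sum of shifts of $X$, with $X$ itself among the summands; since $\thick_\T^1(-)$ is closed under direct summands, $X \in \thick_\T^1(X \para \bm{f}) \subseteq \thick_\T(X \para \bm{f})$.

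For $(3) \Rightarrow (1)$, suppose for contradiction that $\T_\p \neq 0$ for some $\p \neq \m$, and pick $X \in \T$ with $X \neq 0$ in $\T_\p$. By $(3)$, there is an $\m$-primary ideal $(\bm{f}) = (f_1, \dots, f_l)$ with $X \in \thick_\T(X \para \bm{f})$; since $\p \neq \m$, the ideal $(\bm{f})$ is not contained in $\p$, so some $f_i \notin \p$. By Lemma~\ref{structure}, every Koszul object $Y \para r$ with $r \notin \p$ lies in $\T_{Z(\p)}$; writing $Y_j = X \para (f_1, \dots, f_j)$, this places $Y_i$ in $\T_{Z(\p)}$, and thickness of $\T_{Z(\p)}$ then propagates this to every $Y_j$ with $j \geq i$. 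Hence $X \para \bm{f} = Y_l \in \T_{Z(\p)}$, so $X \in \thick_\T(X \para \bm{f}) \subseteq \T_{Z(\p)}$, i.e. $X = 0$ in $\T_\p$, a contradiction. The most delicate point in the argument is the inductive splitting in $(2) \Rightarrow (3)$: one must ensure each successive $f_{i+1}$ still annihilates the identity of the preceding Koszul object, which is guaranteed by the $R$-linearity of $\Sigma$; the remaining implications follow essentially formally from Corollary~\ref{module}, Lemma~\ref{structure}, and thickness of $\T_{Z(\p)}$.
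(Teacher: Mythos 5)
Your proof is correct and takes essentially the same route as the paper: $(1)\Rightarrow(2)$ via the identification $V(\ann_R X)=\{\p\in\Spec(R)\mid X\neq 0 \text{ in }\T_\p\}$, $(2)\Rightarrow(3)$ by splitting the Koszul objects on a generating set of $\m^j$ (using, as the paper implicitly does, that the $R$-action is compatible with $\Sigma$), and $(3)\Rightarrow(1)$ via Lemma~\ref{structure}. The only cosmetic difference is in $(3)\Rightarrow(1)$: you invoke the generation statement of Lemma~\ref{structure} (Koszul objects $Y\para r$ with $r\notin\p$ lie in $\T_{Z(\p)}$) together with closure of $\T_{Z(\p)}$ under cones, whereas the paper uses the support inclusion (\ref{Koszul}) and the Hom-localization isomorphism; both are immediate consequences of the same lemma.
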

\begin{proof}
$(1)\Rightarrow (2)$: By Lemma \ref{support}, we get that for each $X\in \T$,
$$
V(\ann_{R}X)=\Supp_R \Hom_{\T}(X,X).
$$
The assumption implies that $\Supp_{R}\Hom_{\T}(X,X)\subseteq \{\m\}$. Thus  $V(\ann_{R}X)\subseteq \{\m\}$. This means $\m\subseteq \sqrt{\ann_{R}X}$. It follows that $\m^j\subseteq \ann_{R}X$ for some $j\in \N$.

$(2)\Rightarrow (3)$: By assumption, there exists $j\in \N$ such that $\m^j\subseteq \ann_{\T}X$. We write $\m^j=(\f)$, where $\f=f_1,\ldots,f_l$. Since $\m^j\subseteq \ann_{R}X$, $X$ is a direct summand of $X\para \f$ in $\T$. In particular, $X\in \thick_{\T}(X\para \bm{f})$.

$(3)\Rightarrow (1)$: We just need to show that for each $x\in \T$, $X$ is zero in $\T_\p$ if $\p\neq \m$. According to the hypothesis, it is enough to show $X\para\f=0$ in $\T_\p$ if $\p\neq \m$, where $(\f)$ is an $\m$-primary ideal. Combining with (\ref{Koszul}) in \ref{def of Kos}, we have
$$
\Supp_{R}\Hom_{\T}(X\para\f,X\para\f)\subseteq \{\m\}.
$$
The desired result follows immediately from Lemma \ref{structure}.
\end{proof}

Combining Corollary \ref{sin} with Lemma \ref{theorem1}, we recover the following result of Keller, Murfet, and Van den Bergh \cite[Proposition A.2]{KMVdB}.
\begin{corollary}\label{iso}
Let $(R,\m,k)$ be a commutative noetherian local ring. Then $R$ has an isolated singularity if and only if $\D_{\sg}(R)=\thick_{\D_{\sg}(R)}(k)$. \qed 
\end{corollary}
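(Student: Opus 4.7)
The plan is to apply Lemma \ref{theorem1} with $\T = \D_{\sg}(R)$ and to read its condition (1) through Corollary \ref{sin}. Since by Example \ref{regular} a local ring $R_\p$ is regular exactly when $\D_{\sg}(R_\p) = 0$, and since Corollary \ref{sin} identifies $\D_{\sg}(R)_\p$ with $\D_{\sg}(R_\p)$, the condition $\{\p\in\Spec(R)\mid \D_{\sg}(R)_\p\ne 0\}\subseteq\{\m\}$ is precisely the statement that $R$ has an isolated singularity. My task therefore reduces to showing that Lemma \ref{theorem1}(3), which is equivalent to (1), is in turn equivalent to $\D_{\sg}(R) = \thick_{\D_{\sg}(R)}(k)$.

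For the forward direction, I would fix $X \in \D_{\sg}(R)$ and, using Lemma \ref{theorem1}(3), choose an $\m$-primary ideal $(\bm{f}) = (f_1, \ldots, f_l)$ with $X \in \thick_{\D_{\sg}(R)}(X \para \bm{f})$. The core step is then to show $X \para \bm{f} \in \thick_{\D_{\sg}(R)}(k)$. I would prove this by iterating the defining Koszul triangles to see that $X \para \bm{f}$ is bounded in $\D^f(R)$ and that each cohomology module is supported in $V(\bm{f}) = \{\m\}$, hence of finite length over $R$; a dévissage along canonical truncation triangles together with the fact that finite-length modules lie in $\thick_{\D^f(R)}(k)$ via their composition series would then yield $X \para \bm{f} \in \thick_{\D^f(R)}(k) \subseteq \thick_{\D_{\sg}(R)}(k)$, and chaining gives $X \in \thick_{\D_{\sg}(R)}(k)$.

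For the reverse direction, I would assume $\D_{\sg}(R) = \thick_{\D_{\sg}(R)}(k)$ and pick $\p \in \Spec(R) \setminus \{\m\}$. Since the Verdier quotient functor $\D_{\sg}(R) \to \D_{\sg}(R)_\p$ is triangle-exact and essentially surjective, it carries a thick generator to a thick generator of the target. But the image of $k$ in $\D_{\sg}(R)_\p \cong \D_{\sg}(R_\p)$ is zero, because $k_\p = 0$ whenever $\p \ne \m$. Consequently $\D_{\sg}(R_\p) = 0$, and $R_\p$ is regular by Example \ref{regular}, so $R$ has an isolated singularity.

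The main obstacle is the cohomological step in the forward direction: verifying that the iterated Koszul object $X \para \bm{f}$ has bounded, finite-length-supported cohomology and therefore lies in $\thick_{\D^f(R)}(k)$. This demands tracking cohomology through a sequence of defining Koszul triangles and then invoking a standard dévissage to pass from finite-length modules to the thick closure of $k$. Everything else is a direct logical assembly of Lemma \ref{theorem1}, Corollary \ref{sin}, and Example \ref{regular}.
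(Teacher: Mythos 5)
Your argument is correct and is exactly the combination the paper intends: Corollary \ref{sin} translates condition (1) of Lemma \ref{theorem1} into the isolated singularity condition, and the passage from condition (3) to $\thick_{\D_{\sg}(R)}(k)$ (via the finite-length cohomology of $X\para\bm{f}$ and d\'evissage) plus the localization argument for the converse are precisely the details the paper leaves implicit behind its ``Combining Corollary \ref{sin} with Lemma \ref{theorem1}''. No gaps; the approach is essentially the same as the paper's.
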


\begin{chunk}
For a commutative noetherian local ring $(R,\m, k)$ and a finitely generated $R$-module $M$, the \emph{depth} of $M$, denoted $\depth(M)$, is the length of a maximal $M$-regular sequence contained in $\m$. This is well defined as all maximal $M$-regular sequences contained in $\m$ have the same length; see \cite[Section 1.2]{BH} for more details.
\end{chunk}

\begin{lemma}\label{syzygy}
Let $(R,\m, k)$ be a commutative noetherian local ring and $X$ be a complex in $\D_{\sg}(R)$. For each $n\gg 0$, there exists an $R$-module $M$ such that $X\cong \Sigma^n(M)$ in $\D_{\sg}(R)$ and $\depth(M)\geq \depth(R)$.
\end{lemma}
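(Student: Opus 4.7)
The plan is to first replace $X$, up to shift in $\D_{\sg}(R)$, by a single $R$-module $N$ via brutal truncation of a projective resolution, and then substitute a sufficiently high syzygy of $N$ so that the depth lemma forces depth at least $\depth R$. Both moves are classical.

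For the first step, since $X\in\D^f(R)$ has finitely many nonzero cohomologies, I may assume it is a bounded complex and choose $a\in\Z$ with $\h^i(X)=0$ for $i<a$. Take a projective resolution $\pi\colon P\to X$ with each $P^i$ a finitely generated free $R$-module and $P^i=0$ for $i$ above the amplitude of $X$. For any $m<a$, the short exact sequence of complexes
\[
0\to \sigma^{\leq m}(P)\to P\to \sigma^{>m}(P)\to 0
\]
gives a triangle in $\D^f(R)$ whose third term is a bounded complex of finitely generated projectives, hence perfect; consequently $\sigma^{\leq m}(P)\cong P\cong X$ in $\D_{\sg}(R)$. Since $P$ is exact in degrees below $a$, the truncation has a single nonzero cohomology $N\colonequals \Ker(P^{m+1}\to P^{m+2})$ placed in degree $m$, so $\sigma^{\leq m}(P)\cong \Sigma^{-m}(N)$ in $\D^f(R)$, and therefore $X\cong \Sigma^{-m}(N)$ in $\D_{\sg}(R)$.

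For the second step, the short exact sequences $0\to \Omega_R^k(N)\to P^{-k+1}\to \Omega_R^{k-1}(N)\to 0$ coming from a free resolution of $N$ yield triangles in $\D_{\sg}(R)$ with middle term zero, which iterate to $N\cong \Sigma^k(\Omega_R^k(N))$. Combined with the first step, $X\cong \Sigma^{-m+k}(\Omega_R^k(N))$ for every $k\geq 0$. The classical depth lemma applied inductively to these sequences gives
\[
\depth(\Omega_R^k(N))\geq \min\{\depth R,\ \depth N + k\},
\]
which is at least $\depth R$ once $k\geq \depth R$. Setting $M\colonequals \Omega_R^{n+m}(N)$ for any $n\geq -m+\depth R$ produces an $R$-module with $X\cong \Sigma^n(M)$ and $\depth M\geq \depth R$; since the threshold depends only on $X$ and $R$, the conclusion holds for all $n\gg 0$. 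I do not anticipate a real obstacle here: the argument is essentially a bookkeeping of shifts and truncations combined with one invocation of the depth lemma, and the only care needed is to pick $m$ sufficiently far below the cohomological support of $X$ so that the brutal truncation collapses cleanly to a single module.
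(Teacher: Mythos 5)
Your argument is correct and takes essentially the same route as the paper's proof: reduce $X$ to a module in $\D_{\sg}(R)$ by brutally truncating a projective resolution (the argument of Lemma \ref{technique}), then replace that module by a high syzygy, using $X\cong\Sigma^k(\Omega_R^k(-))$ in $\D_{\sg}(R)$ and the depth lemma to get depth at least $\depth(R)$. The only cosmetic slips are that with the paper's cohomological indexing $\sigma^{\leq m}(P)$ is a quotient complex of $P$ rather than a subcomplex, and the identification $N=\Ker(P^{m+1}\to P^{m+2})$ requires $m\leq a-2$; both are harmless since you choose $m$ sufficiently negative anyway.
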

\begin{proof}
With the same argument in the proof of Lemma \ref{technique}, we may assume $X$ is an $R$-module.  By taking brutal truncation, we see easily that $X$ is isomorphic to $\Sigma^n(\Omega^n_R(X))$ in $\D_{\sg}(R)$ for all $n\in \N$. If  $n\geq \depth(R)$, then $\depth(\Omega^n_R(X))\geq \depth(R)$; see \cite[1.3.7]{BH}. This finishes the proof. 
\end{proof}

For a commutative noetherian local ring $(R,\m, k)$ and a finitely generated $R$-module $M$, we let $\nu(M)$ denote the minimal number of generators of $M$. We let $\ell\ell(R)$ denote the Loewy length of $R$ when $R$ is artinian; see \ref{def of dim}.
\begin{lemma}\label{bound}
Let $(R,\m,k)$ be an isolated singularity and $\dim\D_{\sg}(R)<\infty$. Then

(1) $\ann_R \D_{\sg}(R)$ is $\m$-primary.

(2) For any $\m$-primary ideal $I$ that is contained in $\ann_R\D_{\sg}(R)$, then  
$k$ is a generator of $\D_{\sg}(R)$ with generation time at most $(\nu(I)-\depth(R)+1)\ell\ell(R/I)$. 
\end{lemma}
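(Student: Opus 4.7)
Part (1) is immediate from Theorem \ref{locus}: since $\dim\D_{\sg}(R)<\infty$, that theorem gives $\Sing(R)=V(\ann_R\D_{\sg}(R))$, and the isolated singularity hypothesis forces $\Sing(R)\subseteq\{\m\}$, whence $\ann_R\D_{\sg}(R)$ is $\m$-primary.

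For part (2), fix an $\m$-primary ideal $I\subseteq\ann_R\D_{\sg}(R)$, choose a minimal generating set $\f=f_1,\ldots,f_l$ (so $l=\nu(I)$), and put $s=\ell\ell(R/I)$. By Corollary \ref{iso}, $k$ generates $\D_{\sg}(R)$, so the task is to bound its generation time. Fix $X\in\D_{\sg}(R)$. By Lemma \ref{syzygy}, after a shift (harmless as $\thick^N_{\D_{\sg}(R)}(k)$ is closed under $\Sigma$) we may assume $X$ is a finitely generated $R$-module with $\depth X\geq\depth R$. Since $I\subseteq\ann_R X$, each multiplication $f_i\colon X\to X$ is zero in $\D_{\sg}(R)$, so the defining triangle $X\xrightarrow{f_i} X\to X\para f_i\to\Sigma X$ splits; iterating, $X\para\f$ decomposes as a direct sum of shifts of $X$ in $\D_{\sg}(R)$, and in particular $X$ is a direct summand of $X\para\f$.

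It remains to bound the thick level of $X\para\f$ with respect to $k$. Computed in $\D^f(R)$, $X\para\f$ is quasi-isomorphic to the Koszul complex $X\otimes_R K(\f;R)$, whose cohomologies $\h^i(X\para\f)$ are annihilated by $\f$ and are therefore $R/I$-modules. Depth sensitivity of the Koszul complex yields $\h^i(X\para\f)=0$ outside an interval of length $l-\grade(I,X)$; since $I$ is $\m$-primary we have $\grade(I,X)=\depth X\geq\depth R$, so $X\para\f$ has at most $\nu(I)-\depth R+1$ nonzero cohomologies. For each $R/I$-module $N$, the filtration $N\supseteq\m N\supseteq\cdots\supseteq\m^{s-1}N\supseteq\m^s N=0$ (which exists since $\m^s\subseteq I$) has $k$-vector-space quotients, so $N\in\thick^s_{\D_{\sg}(R)}(k)$.

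Chaining the good truncation triangles $\tau^{\leq i-1}(X\para\f)\to\tau^{\leq i}(X\para\f)\to\Sigma^{-i}\h^i(X\para\f)\to$ in $\D_{\sg}(R)$, and using the standard fact that an extension of an object in $\thick^a(k)$ by one in $\thick^b(k)$ lies in $\thick^{a+b}(k)$, each nonzero cohomology contributes at most $s$ to the thick level, giving $X\para\f\in\thick^{(\nu(I)-\depth R+1)s}_{\D_{\sg}(R)}(k)$; the same containment passes to its summand $X$. The main obstacle is the Koszul depth-sensitivity bound combined with the splitting of $X\para\f$ in $\D_{\sg}(R)$; once these are in place, the thick-level arithmetic via good truncation is routine.
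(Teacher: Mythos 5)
Your proof is correct and follows essentially the same route as the paper: reduce via Lemma \ref{syzygy} to a module of depth at least $\depth(R)$, note $X$ is a summand of $X\para\f$ since $I$ annihilates the singularity category, bound the number of nonvanishing Koszul cohomologies by depth sensitivity, and absorb each $R/I$-module cohomology into $\thick^{\ell\ell(R/I)}(k)$ before chaining truncation triangles. The only cosmetic difference is that you establish $N\in\thick^{\ell\ell(R/I)}(k)$ directly from the $\m$-adic filtration, whereas the paper quotes Rouquier's result for the artinian ring $R/I$ and restricts scalars.
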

\begin{proof}
(1) This follows immediately from Theorem \ref{locus}.

(2)  Corollary \ref{iso} yields that $k$ is a generator of $\D_{\sg}(R)$. Since $R/I$ is artinian, $N\in \thick_{\D(R/I)}^{\ell\ell(R/I)}(k)$ for any finitely generated $R/I$-module $N$; see \ref{def of dim}. Restricting scalars along the morphism $R\rightarrow R/I$, we get 
\begin{equation}\label{level}
    N\in \thick_{\D(R)}^{\ell\ell(R/I)}(k)
\end{equation} for any finitely generated $R/I$-module $N$. 

For each $X\in \D_{\sg}(R)$, we claim that  $X\in \thick_{\D_{\sg}(R)}^{(\nu(I)-\depth(R)+1)\ell\ell(R/I)}(k)$. By Lemma \ref{syzygy}, we may assume $X$ is a module and $\depth(X)\geq \depth(R)$
. Choose a minimal set of generators of $I$, say $\bm{x}=x_1,\ldots,x_n$, where $n=\nu(I)$. Since $I\subseteq \ann_R\D_{\sg}(R)$, we get that $X$ is a direct summand of $X\para \bm{x}$ in $\D_{\sg}(R)$. As $I$ is $\m$-primary,  the length of the maximal $X$-regular sequence contained in $I$ is equal to $\depth(X)$. It follows from \cite[Theorem 1.6.17]{BH} that there are at most $n-\depth(X)+1$ cohomologies that are non-zero. Note that each cohomology of $X\para\bm{x}$ is an $R/I$-module. Combining with (\ref{level}), we conclude that $X$ is in $ \thick_{\D_{\sg}(R)}^{(n-\depth(X)+1)\ell\ell(R/I)}(k)$. As $\depth(X)\geq \depth(R)$, we have $$(n-\depth(X)+1)\ell\ell(R/I)\leq (n-\depth(R)+1)\ell\ell(R/I).$$ The desired result follows.
\end{proof}
Combining Remark \ref{finite} with Lemma \ref{bound}, we immediately get the following main result of this section.
\begin{theorem}\label{upper bound}
Let $(R,\m, k)$ be an equicharacteristic excellent local ring. If $R$ has an isolated singularity, then

(1) $\ann_R \D_{\sg}(R)$ is $\m$-primary.

(2) For any $\m$-primary ideal $I$ that is contained in $\ann_R\D_{\sg}(R)$,  then  
$k$ is a generator of $\D_{\sg}(R)$ with generation time at most $(\nu(I)-\depth(R)+1)\ell\ell(R/I)$.  ~~$\square$
\end{theorem}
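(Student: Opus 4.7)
The plan is to reduce the theorem directly to Lemma \ref{bound}, which already establishes both (1) and (2) under the pair of hypotheses: isolated singularity (assumed here) and $\dim \D_{\sg}(R)<\infty$. The only missing ingredient is the finiteness of the Rouquier dimension of $\D_{\sg}(R)$ in the equicharacteristic excellent local setting.

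For this I would invoke the Iyengar--Takahashi theorem recorded in Remark \ref{finite}, namely \cite[Corollary 7.2]{IT2016}: the bounded derived category $\D^f(R)$ of an equicharacteristic excellent local ring has finite Rouquier dimension. Since $\D_{\sg}(R)$ is a Verdier quotient of $\D^f(R)$, its dimension is bounded above by $\dim \D^f(R)$ and is in particular finite. Lemma \ref{bound} then applies verbatim and yields both (1) and (2).

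Since the content has been absorbed into Lemma \ref{bound} and its antecedents, the main obstacle is not in this packaging step but in those antecedents. Conclusion (1) rests on Theorem \ref{locus}, which identifies $V(\ann_R\D_{\sg}(R))$ with $\Sing(R)$ whenever $\dim \D_{\sg}(R)<\infty$; combined with the isolated-singularity hypothesis $\Sing(R)\subseteq\{\m\}$, this forces $\ann_R\D_{\sg}(R)$ to be $\m$-primary. Conclusion (2) rests on the Koszul-object mechanism of Lemma \ref{bound}: after replacing $X\in \D_{\sg}(R)$ by a high syzygy of depth at least $\depth(R)$ via Lemma \ref{syzygy}, the inclusion $I\subseteq \ann_R\D_{\sg}(R)$ makes $X$ a direct summand of $X\para \bm{x}$ for a minimal generating set $\bm{x}$ of $I$; the Koszul complex $X\para \bm{x}$ has at most $\nu(I)-\depth(R)+1$ nonzero cohomologies by \cite[Theorem 1.6.17]{BH}, each such cohomology is an $R/I$-module and hence lies in $\thick^{\ell\ell(R/I)}(k)$ by Rouquier's bound for artinian rings, and splicing the defining triangles of the Koszul object yields the total count $(\nu(I)-\depth(R)+1)\ell\ell(R/I)$.
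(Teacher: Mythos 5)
Your proposal is correct and follows exactly the paper's route: the paper's proof of Theorem \ref{upper bound} is precisely the one-line combination of Remark \ref{finite} (finiteness of $\dim\D_{\sg}(R)$, deduced from Iyengar--Takahashi's bound for $\D^f(R)$ via the Verdier quotient) with Lemma \ref{bound}, whose internal mechanism (Theorem \ref{locus} for (1); syzygy reduction, Koszul object, and Rouquier's artinian bound for (2)) you have also summarized accurately.
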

\begin{remark}\label{connection}
When $(R,\m, k)$ is an equicharacteristic complete Cohen-Macaulay local ring, the above result was proved by Dao and Takahashi \cite[Theorem 1.1]{DT2015} by replacing $\ann_R\D_{\sg}(R)$ by the Noether different of $R$. Indeed, in this case, it is proved that the Noether different  annihilates the singularity category of $R$ and it is $\m$-primary; see \cite[Lemma 2.1, Proposition 4.1]{IT2021} and \cite[Lemma 6.12]{Yoshino}, respectively. Thus we extend Dao and Takahashi's result to the non Cohen-Macaulay rings. 
\end{remark} 

We end this section by applying Theorem \ref{upper bound} to compute an upper bound for the dimension of the singularity category. The ring considered in the following example is not Cohen-Macaulay. Thus one can't apply Dao and Takahashi's result mentioned in Remark \ref{connection}.
\begin{example}
Let $R=k\llbracket x,y\rrbracket/(x^2,xy)$, where $k$ is a field.  This is an equicharacteristic complete local ring. Note that $R$ is not Cohen-Macaulay as $0=\depth(R)<\dim(R)=1$. 

We let $\m$ denote the maximal ideal $(\overline{x},\overline{y})$ of $R$. By Example \ref{example}, we get that $\ann_R\D_{\sg}(R)=\m$. Thus $R$ has an isolated singularity; see Theorem \ref{locus} and Remark \ref{finite}. It follows immediately from Theorem \ref{upper bound} that 
 $$
 \dim\D_{\sg}(R)\leq 3\ell\ell(R/\m)-1=2.
 $$
\end{example}

\bibliographystyle{amsplain}
\bibliography{ref}
\end{document}